\xpatchcmd{\@thm}{\thm@headpunct{.}}{\thm@headpunct{}}{}{}
\newcolumntype{P}[1]{>{\centering\arraybackslash}p{#1}}
\newtheorem{theorem}{Theorem}[section]
\newtheorem{lemma}[theorem]{Lemma}
\theoremstyle{definition}
\newtheorem*{ThmA}{Theorem A}
\newtheorem*{CorB}{Corollary B}
\newcommand{\irr}{{\mathrm {Irr}}}
\newcommand{\cd}{{\mathrm {cd}}}
\newcommand{\cod}{{\mathrm {cod}}}
\newcommand{\syl}{\mathrm{Syl}}
\newcommand{\sg}{|G|}
\newcommand{\I}{\mathrm{I}}
\newcommand*{\rom}[1]{\expandafter\@slowromancap\romannumeral #1@}
\begin{document}
	
	\title{Finite Groups With coprime non-linear codegrees}
	
	\author[A. Zarezadeh]{Ashkan ZareZadeh$^{1}$}
	\author[B. Khosravi]{Behrooz Khosravi$^1$}
	\author[Z. Akhlaghi]{Zeinab Akhlaghi$^{1,2}$}

	\address{$^{1}$ Faculty of Mathematics and Computer Science, Amirkabir University of Technology (Tehran Polytechnic), 15914 Tehran, Iran.}
	\address{$^{2}$ School of Mathematics,
		Institute for Research in Fundamental Science (IPM)
		P.O. Box:19395-5746, Tehran, Iran. }
	\email{\newline \text{(Z. Akhlaghi) }z\_akhlaghi@aut.ac.ir  \newline \text{(B. Khosravi) }khosravibbb@yahoo.com \newline \text{(A. Zarezadeh) }ashkanzarezadeh@gmail.com}

	\begin{abstract}
		 Given a finite group $G$ with an irreducible character $\chi\in\irr(G)$, the codegree of $\chi$ is defined by $\cod(\chi):=|G:\ker\chi|/\chi(1)$. The set of non-linear irreducible character codegrees of $G$ is denoted by $\cod(G|G')$. In this note, we classify all finite groups $G$ with $|\cod(G|G')|>1$ and  for each pair of distinct elements $m , n\in \cod(G|G')$, $m$ and $n$ are coprime. 
		
	\end{abstract}
	\keywords{}
	\subjclass[2000]{05C25, 05C69, 94B25}
	\thanks{The third author is supported by a Grant from IPM (no. 1402200112 ) }
	
	\maketitle
	
	\section{Introduction}
	Throughout this note,   $G $  is a finite group and  $ \irr(G) $  is the set of all irreducible complex characters of       $ G $, and $\pi(G)$ is the set of all prime divisors of $|G|$. Recall that $G$ is said to be a $2$-\textit{Frobenius group}, if $G$ has a normal series $1<K<H<G$, such that $G/K$ is a Frobenius group with $H/K$ as the Frobenius kernel, and $K$ is the Frobenius kernel of the Frobenius group $H$.
	
	In \cite{Qian},     the codegree of  $ \chi\in\irr(G) $   is defined as     $ \cod(\chi) = |G: \ker\chi|/\chi(1) $. Let $N$ be a noraml subgroup of $G$. The set of irreducible characters of $G$ whose kernel does not contain $N$ is denoted by $\irr(G|N)$, i.e. $\irr(G|N)=\irr(G)\setminus\irr(G/N)$. Therefore, $\irr(G|G')$ is the set of non-linear irreducible characters of $G$.
	Let  $\cd(G|N)=\{\chi(1)\mid \chi\in\irr(G|N)\}$, and $\cod(G|N)=\{\cod(\chi)\mid \chi\in\irr(G|N)\}$. 
	
	In \cite{Qian}, the authors studied the graph $\Gamma(G|N)$, whose vertices are the prime divisors of the elements in $\cod(G|N)$, and two distinct primes $p$ and $q$ are adjacent if there exists some $\chi\in\irr(G|N)$, such that $pq\mid \cod(\chi)$. For simplicity, $\Gamma(G|G)$ is denoted by $\Gamma(G)$.
	They showed that $\Gamma(G)$ is disconnected if and only if $G$ is either a Frobenius group or a $2$-Frobenius group. It is natural to ask \textit{if the disconnectedness of $\Gamma(G|G')$ results in the disconnectedness of $\Gamma(G)$}. In general, the answer is negative and using GAP\cite{GAP}, for $G=\text{SmallGroup}(480, 1188)$, we see that $\cod(G|G')=\{5,15,32\}$     and $\cod(G)=\{2,3,5,6,15,32\}$, which yields that $\Gamma(G|G')$ is disconnected while $\Gamma(G)$ is connected. In this note, we show that if $\Gamma(G|G')$ is disconnected and $|\cod(G|G')|=2$, then $\Gamma(G)$ is disconnected. So, the answer to the above question is true when $|\cod(G|G')|=2$.  
	
	In \cite{Herzog}, non-nilpotent  finite groups with $|\cd(G|G')|=1$ were determined. Analogously,  in \cite{QianF}, non-nilpotent groups with $|\cod(G|G')|=1$ are classified. So, it is an interesting topic to liken the known results on degrees to codegrees. In \cite{T}, the finite groups whose non-linear degrees are coprime and $|\cd(G|G')|=2$ were studied. We will see that our results classify all the finite groups whose non-linear codegrees are coprime and $|\cod(G|G')|=2$.
	
	Reckon that  $|\cod(G|G')|=2$ and $\Gamma(G|G')$ is disconnected if and only if $\cod(G|G')$ consists of two coprime integers. Obviously,  if $|\cod(G|G')|=m>1$ consists of  mutually coprime integers, then $\Gamma(G|G')$ has exactly $m$ components, and by \cite[Theorem B]{Qian}, we get that $m=2$. So, we prove the following general theorem:

	\begin{ThmA}
		Let $G$ be a finite group. If $|\cod(G|G')|>1$ and $\cod(G|G')$ consists of mutually coprime integers, then one of the following holds:
		\begin{itemize}
			\item $G$ is a Frobenius group and $G\cong C_p^k\rtimes Q_8$, for some prime $p$, and integer $k$:
			\item $G$ is a $2$-Frobenius group and if $1<K<H<G$ is a normal series such that $G/K=H/K\rtimes R$ and $H=K\rtimes J$ are  Frobenius groups with  Frobenius complements $R\leq G/K$ and    $J\leq H$, respectively, then:
			 \begin{enumerate}
				\item	$R$ is cyclic;
				\item $J\cong C_p$, for some prime $p$;
				\item $K$ is the unique minimal normal subgroup of $G$;
				\item For each $1\not=x\in K$, $C_G(x)/K\cong R_0$, where $R_0\leq R$ and $\pi(R_0)=\pi(R)$.
			\end{enumerate}
		\end{itemize}
	\end{ThmA}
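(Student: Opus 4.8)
The plan is to reduce the hypothesis to the statement ``$\cod(G|G')=\{m,n\}$ with $\gcd(m,n)=1$ and $G$ Frobenius or $2$-Frobenius'', and then to analyse each of the two cases by Clifford theory. First I would note that every non-linear $\chi\in\irr(G)$ has $\cod(\chi)>1$ (otherwise $\chi(1)=|G:\ker\chi|\ge\chi(1)^2$). Hence pairwise coprimality of $\cod(G|G')$ puts the prime divisors of distinct codegrees into distinct connected components of $\Gamma(G|G')$, so this graph has at least $|\cod(G|G')|\ge 2$ components and, by \cite[Theorem~B]{Qian}, exactly two; thus $\cod(G|G')=\{m,n\}$ with $\gcd(m,n)=1$ and $m,n>1$, and in particular $\Gamma(G|G')$ is disconnected. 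By the transfer statement recalled in the Introduction, $\Gamma(G)$ is then disconnected, so by \cite{Qian} $G$ is a Frobenius group or a $2$-Frobenius group, and it remains to pin down $G$ in each case.

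For the Frobenius case, write $G=K\rtimes L$ with $K$ the Frobenius kernel. Every $\chi\in\irr(G)$ is either the inflation of some $\psi\in\irr(L)$ or equals $\lambda^{G}$ for some $1\ne\lambda\in\irr(K)$, the latter vanishing off $K$, so that $\ker(\lambda^{G})$ is the largest $G$-invariant subgroup $M_\lambda\le K$ contained in $\ker\lambda$; hence the non-linear codegrees of $G$ are the elements of $\cod(L|L')$ together with numbers whose prime divisors lie in $\pi(K)$. I would first show $K$ is a $p$-group (if $|\pi(K)|\ge2$, the Sylow decomposition of $K$ produces among these codegrees a $p$-number, a $p'$-number and a number divisible by both, contradicting coprimality); then the $K$-codegrees are $p$-powers, so $\cod(L|L')$ must be a single number coprime to them, and $L$ is non-abelian with $|\cod(L|L')|=1$. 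Combining the classification of groups with $|\cod(\cdot|\cdot')|=1$ (\cite{Herzog,QianF}) with the fact that Frobenius complements have cyclic or generalised-quaternion Sylow subgroups leaves only $L\cong Q_8$. Its central involution then acts on $K$ fixed-point-freely of order $2$, hence by inversion, so $K$ is abelian; a character with a cyclic quotient of order $p^2$, or a reducible action of $L$, would each create a second $p$-power among the $|K:M_\lambda|$, so $K\cong C_p^k$ with $L$ acting irreducibly. As $\gcd(|K|,|L|)=1$, $p$ is odd, and conversely $\cod\!\big(C_p^k\rtimes Q_8\mid(C_p^k\rtimes Q_8)'\big)=\{4,p^k\}$ is a coprime pair.

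For the $2$-Frobenius case, keep the notation of the statement and set $\bar H=H/K\cong J$; as the kernel of the Frobenius group $G/K$ it is nilpotent, and as $\cong J$ it is a Frobenius complement. I would split the non-linear characters of $G$ into three families: (a) those trivial on $H$, with codegrees in $\cod(R|R')$; (b) the $\mu^{G/K}$ with $1\ne\mu\in\irr(\bar H)$, with codegree $|\bar H:\mathrm{core}_{\bar H}(\ker\mu)|/\mu(1)$; and (c) those non-trivial on $K$, lying over $1\ne\lambda\in\irr(K)$: since $J$ acts Frobeniusly, $I_H(\lambda)=K$, so $I_G(\lambda)/K$ is isomorphic to a subgroup $R_\lambda\le R$, and (once $R$ is known cyclic, so that $\lambda$ extends to its stabiliser) such a character has codegree $|K:\mathrm{core}_K(\ker\lambda)|\cdot|R_\lambda|$. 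Families (a)--(c) are each non-empty, so $|\cod(G|G')|=2$ together with coprimality forces each family to contribute a single value; analysing (b) shows $\bar H$ cannot have proper non-trivial subgroups of two different ``core indices'', which with $\bar H$ nilpotent and a Frobenius complement gives $\bar H\cong C_p$ -- whence $J\cong C_p$ (item (2)) and $G/K=C_p\rtimes R$ is Frobenius with $R\hookrightarrow\Aut(C_p)$ cyclic (item (1)). Next I would show every minimal normal subgroup of $G$ lies in $K$ (a normal complement to $K$, or a normal subgroup strictly between $H$ and $G$, would be non-abelian by the Frobenius structure of $G/K$), and then that constancy of the family-(c) codegree forces $\mathrm{core}_K(\ker\lambda)=1$ for all $\lambda\ne1$, i.e.\ $K$ has no proper non-trivial $G$-invariant subgroup; hence $K$ is the unique minimal normal subgroup of $G$ (item (3)). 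The family-(c) value then equals $|K|\cdot|R_\lambda|$, forcing $|R_\lambda|$ -- equivalently, after dualising $\irr(K)$ against $K$, $|C_G(x)/K|$ for $1\ne x\in K$ -- to be constant; coprimality of this value with $p$ together with a Frobenius/transfer argument gives $\pi(R_\lambda)=\pi(R)$ (item (4)).

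I expect the $2$-Frobenius case to be the main obstacle: the combinatorial bookkeeping of which divisibility relations among the three character families are consistent with $\cod(G|G')=\{m,n\}$ coprime -- in particular forcing $\bar H$ to have prime order and $\mathrm{core}_K(\ker\lambda)$ to be trivial for every non-trivial $\lambda$ -- and the passage from the stabiliser condition on $\irr(K)$ to the centraliser condition (4) on $K$. By contrast, the Frobenius case is comparatively routine once $|\cod(L|L')|=1$ has been extracted and the classification of such groups is invoked.
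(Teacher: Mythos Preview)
Your proposal has a genuine gap at the very first reduction step. You write that ``by the transfer statement recalled in the Introduction, $\Gamma(G)$ is then disconnected'', and from this you invoke \cite{Qian} to conclude that $G$ is Frobenius or $2$-Frobenius. But the Introduction does \emph{not} record that implication as an available input: it explicitly exhibits a counterexample ($\mathrm{SmallGroup}(480,1188)$, with $\cod(G|G')=\{5,15,32\}$) to the general claim ``$\Gamma(G|G')$ disconnected $\Rightarrow$ $\Gamma(G)$ disconnected'', and says only that \emph{in this note} the implication is established when $|\cod(G|G')|=2$. That assertion is a \emph{corollary} of Theorem~A, not a hypothesis one may feed into its proof; invoking it here is circular. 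The paper instead proves that a $*$-group must be Frobenius or $2$-Frobenius by a minimal-counterexample argument: one passes to $G/N$ for a well-chosen minimal normal subgroup $N$ and treats separately the possibilities that $G/N$ is a $p$-group, a Frobenius group with $|\cod(G/N\mid(G/N)')|=1$, a Frobenius $*$-group, or a $2$-Frobenius $*$-group, each of which requires its own dedicated lemma.

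Even granting the reduction, your $2$-Frobenius sketch is too optimistic. The step ``constancy of the family-(c) codegree forces $\mathrm{core}_K(\ker\lambda)=1$ for all $\lambda\ne1$'' is not immediate: both factors $|K:\mathrm{core}_K(\ker\lambda)|$ and $|R_\lambda|$ can vary with $\lambda$, and constancy of their product does not by itself pin down either one; moreover, for $\chi\in\irr(\lambda^G)$ one must first argue that $\ker\chi\le K$ before your codegree formula for family~(c) is even valid. In the paper this is by far the longest argument, requiring a careful study of chief factors $K/L$, Casolo's $N_q$-condition, and Zsigmondy's theorem to rule out a non-trivial $L$. (Incidentally, your claim that families (a)--(c) are ``each non-empty'' cannot survive the conclusion that $R$ is cyclic, since then $G/H$ is abelian and family~(a) is empty; the bookkeeping there also needs repair.)
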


If $N\unlhd G$ and $\theta\in {\rm Irr}(N)$, then  $\I_G(\theta)$ denotes the inertia subgroup of $\theta$ in $G$, and ${\rm Irr}(\theta^G)$ denotes the set of all irreducible constituents of $\theta^G$.  For natural number $n=p^km$, where $p$ is a prime, $k$ and $m$ are integers such that $(p,m)=1$, define $n_p=p^k$ and $n_{p'}=m$. For the rest of the notations, we follow \cite{Isaacs}.

	\section{preliminary results}
	Let $G$ be a finite group and $N\trianglelefteq G$ be an elementary abelian $p$-group. We know that the action of $G$ on $\irr(N)$ is equivalent to the action of $G$ on $N$.  Accordingly  $\{C_G(x)|x\in N\}=\{\I_G(\lambda)|\lambda\in\irr(N)\}$, so for each $x\in N$, there exists $\lambda_x\in\irr(N)$, such that $C_G(x)=\I_G(\lambda_x)$. We use this fact frequently and without any further reference.
	
	The following lemma is a corollary of Theorem A and the discussion following  it in \cite{QianF}.
	\begin{lemma}\label{Qian}
		Let $G$ be a finite group and $|\cod(G|G')|=1$. Then $G$ is either a $p$-group, for some prime $p$, or $G=G'\rtimes R$ is a Frobenius group with $G'\cong C_q^k$, for some prime $q$, as the Frobenius kernel and $R$ is cyclic, also $\cod(G|G')=\{q^k\}$, for some integer $k$ and  $q^k$ is cardinal of every minimal normal subgroup of $G$.
	\end{lemma}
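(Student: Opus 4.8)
The plan is to quote Theorem A of \cite{QianF} verbatim and then unpack its conclusion into the explicit dichotomy stated here. Recall that Theorem A of \cite{QianF} classifies the non-nilpotent finite groups $G$ with $|\cod(G|G')|=1$: such a group is a Frobenius group whose kernel is an elementary abelian $q$-group $G'\cong C_q^k$ and whose complement acts fixed-point-freely. The first thing I would do is dispose of the nilpotent case: if $G$ is nilpotent then, writing $G$ as a direct product of its Sylow subgroups, any non-linear irreducible character lies over a non-linear character of a single Sylow factor, and comparing codegrees across two different Sylow factors would give two coprime (hence distinct) codegrees unless only one Sylow subgroup is non-abelian; but then $G$ would fail to be... actually the cleanest route is simply: $|\cod(G|G')|=1$ together with nilpotency forces $G$ to be a $p$-group, since a non-abelian Sylow $p$-subgroup contributes a codegree that is a power of $p$, and two distinct such primes would violate $|\cod(G|G')|=1$. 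That settles the first alternative.

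Next, in the non-nilpotent case I would invoke \cite[Theorem A]{QianF} to get $G=G'\rtimes R$ Frobenius with $G'\cong C_q^k$ elementary abelian. The discussion following that theorem in \cite{QianF} identifies the Frobenius complement $R$: since $G/G'\cong R$ must itself have all non-linear codegrees equal (indeed $\irr(R|R')\subseteq$ characters of $G$ inflated, and a Frobenius complement with non-abelian commutator would produce an extra codegree), one deduces $R'=1$, i.e. $R$ is abelian, and a fixed-point-free abelian group acting on an elementary abelian group is cyclic. Finally, for a Frobenius group with kernel $G'\cong C_q^k$, every non-linear irreducible character is induced from a non-trivial linear character $\lambda$ of $G'$; such a character has degree $|R|$ and kernel trivial, so its codegree is $|G'\rtimes R|/|R| = |G'| = q^k$, giving $\cod(G|G')=\{q^k\}$. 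Since $G'$ is the unique minimal normal subgroup sitting inside the Frobenius kernel and $R$ acts irreducibly... one needs here that $G'$ itself is the minimal normal subgroup, or at least that every minimal normal subgroup has order $q^k$; this follows because a Frobenius complement acts fixed-point-freely, so any $G$-invariant subgroup of $G'$ is $R$-invariant, and minimality forces it to be an irreducible $R$-module of the same order $q^k$ as $G'$ (all such irreducible modules over $\mathbb{F}_q$ on which $R$ acts fixed-point-freely have the same dimension $k$, since $R$ embeds in the multiplicative group of $\mathbb{F}_{q^k}$).

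I expect the only genuine subtlety to be the last clause — that $q^k$ is the order of \emph{every} minimal normal subgroup of $G$, not just of $G'$. This requires a short argument that all minimal normal subgroups of a Frobenius group with elementary abelian kernel of order $q^k$ and cyclic complement have the same order; the key point is that $R$ cyclic acting fixed-point-freely on $\mathbb{F}_q^k$ makes $\mathbb{F}_q^k$ a faithful irreducible $\mathbb{F}_q R$-module (Galois field structure), so $G'$ is the unique minimal normal subgroup and the statement is immediate. Everything else is bookkeeping: translating ``all non-linear codegrees equal'' through the Frobenius structure and reading off the numerical value $q^k$.
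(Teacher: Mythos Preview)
Your overall strategy matches the paper's exactly: the paper does not prove this lemma at all but simply records it as a corollary of \cite[Theorem~A]{QianF} and the discussion following it. So citing \cite{QianF} and unpacking is precisely what is intended.

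However, your unpacking of the last clause contains a genuine error. You assert that ``$R$ cyclic acting fixed-point-freely on $\mathbb{F}_q^{\,k}$ makes $\mathbb{F}_q^{\,k}$ a faithful irreducible $\mathbb{F}_qR$-module (Galois field structure), so $G'$ is the unique minimal normal subgroup.'' This is false: take $R=C_3$ acting diagonally on $\mathbb{F}_4\times\mathbb{F}_4$ via the embedding $C_3\hookrightarrow\mathbb{F}_4^\times$. The action is fixed-point-free but reducible. What is true is that all \emph{irreducible} fixed-point-free $\mathbb{F}_qR$-modules have the same dimension $d$ (the multiplicative order of $q$ modulo $|R|$), but nothing forces $d=k$; one can have $G'\cong M_1\times\cdots\times M_r$ with each $|M_i|=q^d$ and $k=rd>d$. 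Your earlier computation of the codegree as $q^k$ also silently assumes $\ker\lambda^G=1$, which already presupposes the minimality you are trying to prove.

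The correct argument uses the hypothesis $|\cod(G|G')|=1$ directly: if $G'$ were not minimal normal, write $G'=M_1\times\cdots\times M_r$ (Maschke) with $r\ge2$; then a non-trivial $\lambda\in\irr(M_1)$ inflated to $G'$ gives $\cod(\lambda^G)=|M_1|$, while $\mu=\lambda_1\times\cdots\times\lambda_r$ with each $\lambda_i$ non-trivial gives $\ker\mu^G=1$ and $\cod(\mu^G)=|G'|>|M_1|$, contradicting $|\cod(G|G')|=1$. Hence $r=1$, $G'$ is the unique minimal normal subgroup, and $\cod(G|G')=\{|G'|\}=\{q^k\}$ follows. (Your nilpotent case is also slightly incomplete---you handle two non-abelian Sylow subgroups but trail off on $G=P\times A$ with $A$ abelian $p'$; the paper's Lemma~\ref{nil} fills this gap.)
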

	
	\begin{theorem}\cite[Theorem A]{Qian}\label{Qian1}
		Let $G$ be a non-abelian finite group and $p\in\pi(G)$. If $p$ does not divide any element in $\cod(G|G')$, then $P\in\syl_p(G)$ acts Frobeniusly on $G'$.
	\end{theorem}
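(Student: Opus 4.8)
The plan is to prove the contrapositive in a local form: if $P\in\syl_p(G)$ does \emph{not} act Frobeniusly on $G'$, then there is a non-linear $\chi\in\irr(G|G')$ with $p\mid\cod(\chi)$. First I record the two tools I will use repeatedly. The first is the divisibility of codegrees along normal subgroups: if $N\trianglelefteq G$, $\theta\in\irr(N)$ and $\chi\in\irr(\theta^G)$, then $\cod(\theta)\mid\cod(\chi)$. Combined with the hypothesis this yields the working principle
\[
(\dagger)\qquad\text{for every }N\trianglelefteq G\text{ with }N\le G',\ 1\ne\theta\in\irr(N):\quad p\nmid\cod(\theta),
\]
because any $\chi\in\irr(\theta^G)$ lies in $\irr(G|G')$ once $\theta\neq 1_N$ and $N\le G'$. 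The second tool is the observation that since $G'\trianglelefteq G$ one has $P\cap G'\in\syl_p(G')$, so a Frobenius action of $P$ on $G'$ already forces $p\nmid|G'|$; conversely, if $p\nmid |G'|$ the action is coprime and ``Frobenius'' means $C_{G'}(g)=1$ for every $1\ne g\in P$. Thus it suffices to prove $p\nmid|G'|$ and then $C_{G'}(g)=1$ for all nontrivial $p$-elements $g$.

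For $p\nmid|G'|$ I argue by induction on $|G|$ using a minimal normal subgroup $N$ of $G$ inside $G'$. If $N$ were an elementary abelian $p$-group, a nontrivial linear $\theta\in\irr(N)$ has $\cod(\theta)=p$, contradicting $(\dagger)$; if $N\cong S^k$ were non-abelian with $p\mid|S|$, the elementary congruence $\sum_{1\ne\theta_1\in\irr(S)}\theta_1(1)^2=|S|-1\equiv-1\pmod p$ shows some nontrivial (hence faithful) $\theta_1\in\irr(S)$ has $|S|_p\nmid\theta_1(1)$, so $\theta=\theta_1\times 1\times\cdots\times 1$ satisfies $p\mid\cod_N(\theta)=|S|/\theta_1(1)$, again contradicting $(\dagger)$. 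Hence $p\nmid|N|$. Passing to $G/N$, which satisfies the same hypothesis (inflation preserves non-linearity and codegrees, and $(G/N)'=G'/N$), gives $p\nmid|G'/N|$ by induction, so $p\nmid|G'|$.

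With $p\nmid|G'|$ the action of each $p$-element is coprime, so by the fixed-point formula $C_{G'/M}(g)=C_{G'}(g)M/M$ it is enough, again by induction along a minimal normal $N\le G'$, to treat the base case: $C_N(g)=1$ for every $1\ne g\in P$, where $N$ is minimal normal, elementary abelian of order prime to $p$. Dualising the action of $G$ on $N$ to $\irr(N)$, the condition $C_N(g)=1$ for all $1\ne g\in P$ is equivalent to $p\nmid|\I_G(\lambda)|$ for every nontrivial $\lambda\in\irr(N)$ (using that $P\cap T^x=1$ for all conjugates iff $p\nmid|T|$). So suppose some nontrivial $\lambda$ has $p\mid|T|$, $T:=\I_G(\lambda)$; since $p\nmid|N|$ this means $p\mid|T/N|$. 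For $\psi\in\irr(T|\lambda)$ and $\chi=\psi^G\in\irr(G|\lambda)$ the Clifford correspondence gives $\cod(\psi)\mid\cod(\chi)$ (directly from $\ker\chi=\mathrm{core}_G(\ker\psi)\le\ker\psi$ and $\chi(1)=|G:T|\psi(1)$, so $\cod(\chi)=\cod(\psi)\cdot|\ker\psi:\ker\chi|$), so it suffices to produce $\psi\in\irr(T|\lambda)$ with $p\mid\cod(\psi)$.

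Working inside $T$, where $\lambda$ is invariant, I factor out $\ker\lambda$ to arrange $Z:=N/\ker\lambda\cong C_q\le\Center(T/\ker\lambda)$ with a faithful central character, reducing to a self-contained claim: if $Z\cong C_q\le\Center(H)$ and $p\mid|H/Z|$, then some $\mu\in\irr(H)$ faithful on $Z$ has $p\mid\cod(\mu)$. Here the $\mu$ faithful on $Z$ lying over a fixed nontrivial character of $Z$ are the $\alpha$-projective irreducibles of $H/Z$, whose degrees $d_i$ satisfy $\sum d_i^2=|H/Z|$; since $p\mid|H/Z|$ the same mod-$p$ counting yields a $d_{i_0}$ with $(d_{i_0})_p<|H/Z|_p=|H|_p$. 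The one genuine obstacle is controlling the $p$-part of $|H:\ker\mu|$, and I expect this to be the hard part: I resolve it by a further reduction to the case that $H$ has a \emph{unique} minimal normal subgroup containing $Z$ (otherwise a minimal normal subgroup meeting $Z$ trivially lets one inflate from a proper quotient, or build $p$ directly into the codegree via $(\dagger)$ over $M\times Z$). In that remaining case every $Z$-faithful $\mu$ is automatically faithful on $H$, so $\cod(\mu)=|H|/\mu(1)$ and the counting gives $\cod(\mu_0)_p=|H|_p/(d_{i_0})_p>1$ outright. This contradiction establishes the base case and completes the proof; the delicate point throughout is exactly this interaction between the kernel of the induced character and the $p$-part of its codegree, which is why the minimal-normal reductions are essential.
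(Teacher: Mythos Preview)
This statement is not proved in the paper at all: it is quoted verbatim as Theorem~A of Qian--Wang--Wei \cite{Qian} among the preliminary results, with no argument supplied. So there is no ``paper's own proof'' to compare your attempt against.

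That said, your independent proof is essentially correct. Two small points of exposition are worth tightening. First, your invocation of ``$(\dagger)$ over $M\times Z$'' is misleading: $(\dagger)$ is a consequence of the hypothesis on $G$ and concerns normal subgroups of $G$ contained in $G'$, whereas $M\times Z$ lives inside $H=T/\ker\lambda$, a subquotient of the generally non-normal subgroup $T=\I_G(\lambda)$. What actually makes that step work is a direct construction: pick $\alpha\in\irr(M)$ nontrivial with $p\mid\cod_M(\alpha)$ (using the same mod-$p$ degree count you used earlier, now applied inside $M$), set $\theta=\alpha\times\bar\lambda\in\irr(M\times Z)$, and then apply the subnormal divisibility lemma inside $H$ to get $\mu\in\irr(H|\theta)$ with $p\mid\cod_{M\times Z}(\theta)\mid\cod_H(\mu)$. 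Second, ``faithful on $Z$'' should really be ``lying over the fixed $\bar\lambda$''; since $|Z|=q$ these differ only by a Galois twist, so no harm is done, but the Clifford correspondence back to $\psi\in\irr(T|\lambda)$ needs the latter. With those clarifications your reduction to a unique minimal normal subgroup of $H$, followed by the projective degree count $\sum d_i^2=|H/Z|$, goes through cleanly.
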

	
	\begin{lemma}\cite[Lemma 2.1]{Qian}\label{subnormality}
		Let $G$ be a finite group and $\chi\in\irr(G)$. If $M$ is subnormal in $G$ and $\psi\in\irr(\chi_M)$, then $\cod(\psi)$ divides $\cod(\chi)$.
	\end{lemma}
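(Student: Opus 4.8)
The plan is to prove the divisibility through three successive reductions, the last of which invokes a classical theorem of Clifford theory. First I would reduce to the case where $M$ is \emph{normal} in $G$. Fixing a subnormal series $M=M_0\trianglelefteq M_1\trianglelefteq\cdots\trianglelefteq M_r=G$ and using that $\psi$ is a constituent of $\chi_M$ (obtained from $\chi$ by successive restriction), I would choose by downward induction irreducible constituents $\chi_i\in\irr(M_i)$ with $\chi_r=\chi$, $\chi_0=\psi$, and $\chi_{i-1}\in\irr((\chi_i)_{M_{i-1}})$ for every $i$; at each step one picks a constituent of $(\chi_i)_{M_{i-1}}$ lying under $\psi$, which is possible since $\psi$ remains a constituent of $(\chi_i)_M$. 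Because divisibility of integers is transitive, it then suffices to prove $\cod(\chi_{i-1})\mid\cod(\chi_i)$ at each normal step, so from now on I may assume $M\trianglelefteq G$.

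Second, I would pass to the inertia group in order to reduce to the case where $\psi$ is $G$-invariant. Let $\phi\in\irr(\I_G(\psi)\mid\psi)$ be the Clifford correspondent of $\chi$, so that $\chi=\phi^{G}$ and $\chi(1)=|G:\I_G(\psi)|\,\phi(1)$. Using the standard fact that the kernel of an induced character is the core of $\ker\phi$ in $G$, a direct computation gives
\[
\frac{\cod(\chi)}{\cod(\phi)}=\frac{|\ker\phi|}{|\bigcap_{g\in G}(\ker\phi)^{g}|}=\bigl[\ker\phi:\textstyle\bigcap_{g\in G}(\ker\phi)^{g}\bigr],
\]
which is a positive integer; hence $\cod(\phi)\mid\cod(\chi)$. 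Since $\psi$ is $\I_G(\psi)$-invariant and $\phi$ lies over $\psi$, it remains only to treat the invariant situation, i.e.\ to show $\cod(\psi)\mid\cod(\phi)$.

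Third, assume $\psi$ is $G$-invariant, so that $\chi_M=e\psi$ with $e=\chi(1)/\psi(1)$ and $\ker\chi\cap M=\ker(\chi_M)=\ker\psi$. A short index computation, using $|M\ker\chi|=|M|\,|\ker\chi|/|\ker\chi\cap M|$, then yields
\[
\frac{\cod(\chi)}{\cod(\psi)}=\frac{|G:\ker\chi|}{e\,|M:\ker\psi|}=\frac{|G:M\ker\chi|}{e}.
\]
Thus everything comes down to proving $e\mid|G:M\ker\chi|$. For this I would pass to $\overline{G}=G/\ker\chi$, in which $\overline{\chi}$ is faithful and lies over the $G$-invariant character $\overline{\psi}$ of $\overline{M}=M\ker\chi/\ker\chi$ with $\overline{\chi}_{\overline{M}}=e\,\overline{\psi}$, and invoke the classical theorem that for a $G$-invariant $\theta\in\irr(N)$ with $N\trianglelefteq G$ the quotient $\chi(1)/\theta(1)$ divides $|G:N|$ for every $\chi\in\irr(G\mid\theta)$. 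Applied here this gives $e\mid|\overline{G}:\overline{M}|=|G:M\ker\chi|$, completing the chain $\cod(\psi)\mid\cod(\phi)\mid\cod(\chi)$.

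The main obstacle is the final divisibility $e\mid|G:M\ker\chi|$: the elementary index manipulations faithfully reduce the lemma to it, but it is the genuinely deepest ingredient, resting on the character-triple reduction of $(G,M,\psi)$ to a central extension and the fact that the degrees of the irreducible projective representations of $G/M$ divide $|G/M|$. Everything else is bookkeeping with the Clifford correspondence and the behaviour of codegrees under kernels and cores.
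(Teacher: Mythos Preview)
Your argument is correct.  The paper does not give its own proof of this lemma; it simply quotes it as \cite[Lemma~2.1]{Qian}, so there is no in-paper proof to compare against.  Your three-stage reduction---first along a subnormal chain, then via the Clifford correspondent to make $\psi$ invariant, and finally invoking the fact that $\chi(1)/\psi(1)$ divides $|G:M|$ for invariant $\psi$ (Isaacs, Corollary~11.29)---is exactly the standard route, and each computation you record (in particular $\cod(\chi)/\cod(\phi)=[\ker\phi:\mathrm{core}_G(\ker\phi)]$ and $\cod(\chi)/\cod(\psi)=|G:M\ker\chi|/e$) is correct.
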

	
	\begin{lemma}\cite[Lemma 2.7]{KLG}\label{2.7}
		Let $K$ be a normal abelian subgroup of $G$ and $\chi\in \irr(G)$. If $\ker(\chi)\cap K=1$, then $|K|$ divides $\cod(\chi)$.
	\end{lemma}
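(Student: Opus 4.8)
The plan is to reduce to the case where $\chi$ is faithful and then invoke Ito's theorem, which bounds the degree of an irreducible character by the index of an abelian normal subgroup.

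First I would pass to the quotient $\overline{G}:=G/\ker\chi$, and let $\overline{\chi}\in\irr(\overline{G})$ be the faithful character whose inflation to $G$ is $\chi$. Since $\ker\chi\cap K=1$, the subgroup $\overline{K}:=K\ker\chi/\ker\chi$ is a normal abelian subgroup of $\overline{G}$ isomorphic to $K$, so in particular $|\overline{K}|=|K|$; moreover $\cod(\chi)=|G:\ker\chi|/\chi(1)=|\overline{G}|/\overline{\chi}(1)=\cod(\overline{\chi})$. Hence the assertion is equivalent to the following: if $\overline{K}$ is an abelian normal subgroup of $\overline{G}$ and $\overline{\chi}\in\irr(\overline{G})$ is faithful, then $|\overline{K}|$ divides $|\overline{G}|/\overline{\chi}(1)$.

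Next I would apply Ito's theorem (see \cite[Theorem~6.15]{Isaacs}) to the abelian normal subgroup $\overline{K}\trianglelefteq\overline{G}$, which yields that $\overline{\chi}(1)$ divides $|\overline{G}:\overline{K}|$. Writing $|\overline{G}|/\overline{\chi}(1)=|\overline{K}|\cdot\big(|\overline{G}:\overline{K}|/\overline{\chi}(1)\big)$ and observing that the second factor is a positive integer by the previous step, we conclude that $|K|=|\overline{K}|$ divides $|\overline{G}|/\overline{\chi}(1)=\cod(\chi)$.

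There is no serious obstacle here: the only point that requires care is the reduction, where one must check that the codegree is unchanged upon factoring out $\ker\chi$ and that the hypothesis $\ker\chi\cap K=1$ is exactly what forces $\overline{K}\cong K$ (so that $|\overline{K}|=|K|$, and not merely a proper divisor of it). The abelianness of $K$ is essential, since it is precisely the hypothesis of Ito's theorem, and the conclusion fails in general for non-abelian normal subgroups.
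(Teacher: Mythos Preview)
Your argument is correct: the reduction to the faithful case is clean, and then Ito's theorem immediately gives $\overline{\chi}(1)\mid |\overline{G}:\overline{K}|$, from which the divisibility $|K|\mid\cod(\chi)$ follows. Note, however, that the paper does not supply its own proof of this lemma; it is simply quoted from \cite[Lemma~2.7]{KLG}, so there is no in-paper argument to compare against. Your proof is the standard one and matches what one finds in the cited source.
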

	
Let $G$ be a finite group acting on a module $M$ over a finite field, and $q$ be a prime
divisor of the order of $G/C_G(M)$. We say that the pair$ (G, M)$ satisfies $N_q$ if, for every
$v \in M\setminus \{0\},$ $C_G(v)$ contains a Sylow $q$-subgroup of G as a normal subgroup.

\begin{theorem}\cite[Proposition 8]{carlo}\label{sak}
	If $(G,M) $ satisfies $N_q$, then $(|M|-1)/(|C_M(Q)|-1) = n_q(G)$, where $n_q(G)$ is the number of Sylow $q$-subgroups of $G$ and $Q\in \syl_q(G)$. 
\end{theorem}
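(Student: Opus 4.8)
The plan is to prove the identity by partitioning the set $M \setminus \{0\}$ of nonzero vectors according to which Sylow $q$-subgroup of $G$ fixes each of them, and then counting the blocks. The whole argument rests on turning the normality built into $N_q$ into a \emph{uniqueness} statement, so that this assignment of a Sylow subgroup to each nonzero vector is single-valued.

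First I would extract the decisive consequence of $N_q$: for every $v \in M \setminus \{0\}$ there is exactly one $Q' \in \syl_q(G)$ with $Q' \le C_G(v)$, equivalently with $v \in C_M(Q')$. Indeed, $N_q$ provides a Sylow $q$-subgroup $S$ of $G$ that is normal in $C_G(v)$. Since $S \le C_G(v) \le G$ and $|S|$ is the full $q$-part of $|G|$, $S$ is also a Sylow $q$-subgroup of $C_G(v)$; being normal there, it is the unique Sylow $q$-subgroup of $C_G(v)$. Now any $Q' \in \syl_q(G)$ fixing $v$ lies in $C_G(v)$, so it is a $q$-subgroup of $C_G(v)$ and hence is contained in $S$; comparing orders forces $Q' = S$. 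This is the step I expect to be the crux, as it is precisely where the hypothesis of \emph{normality} of the Sylow subgroup (rather than mere containment of one) is indispensable: without it, several Sylow subgroups could fix the same vector and the map below would not be well-defined.

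Next I would read this as a map $v \mapsto Q_v$ from $M \setminus \{0\}$ to $\syl_q(G)$, so that its fibers partition $M \setminus \{0\}$. The fiber over a fixed $Q' \in \syl_q(G)$ is exactly $\{\, v \ne 0 : Q' \text{ fixes } v \,\} = C_M(Q') \setminus \{0\}$, of size $|C_M(Q')| - 1$. Because $G$ acts on $M$ and the Sylow $q$-subgroups are all $G$-conjugate, for $g \in G$ one has $v \in C_M(Q^g)$ iff $gv \in C_M(Q)$, i.e.\ $C_M(Q^g) = g^{-1}\cdot C_M(Q)$; since $g^{-1}$ acts as a bijection on $M$, this gives $|C_M(Q')| = |C_M(Q)|$ for every $Q' \in \syl_q(G)$. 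Hence all fibers share the common size $|C_M(Q)| - 1$. (The hypothesis that $q$ divides $|G/C_G(M)|$ makes the count meaningful: it forces $C_M(Q) \ne M$, while $N_q$ already forces $C_M(Q) \ne \{0\}$, so the denominator is a positive integer.)

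Finally I would sum the fiber sizes over the $n_q(G)$ distinct Sylow $q$-subgroups to obtain $|M| - 1 = n_q(G)\,(|C_M(Q)| - 1)$, and divide to conclude that $(|M|-1)/(|C_M(Q)|-1) = n_q(G)$. Beyond Sylow's theorem (a normal Sylow subgroup is the unique one) and the $G$-equivariance of fixed-point sets, no estimates are required; the entire weight of the proof is carried by the uniqueness established in the second paragraph.
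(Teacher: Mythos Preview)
Your argument is correct. The paper does not supply its own proof of this statement: it is quoted verbatim as \cite[Proposition 8]{carlo} and used as a black box later in Theorem~\ref{2Fro}, so there is nothing in the paper to compare against. Your partition of $M\setminus\{0\}$ by the unique Sylow $q$-subgroup fixing each nonzero vector, together with the observation that normality in $C_G(v)$ forces uniqueness and that $G$-conjugacy equalizes the fiber sizes, is exactly the standard proof of this fact and is complete as written.
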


\begin{theorem}\cite[Theorem B]{Isaacs2}\label{Solve}
		 Let $N$ be a normal subgroup of a group $G$ and suppose that $|\cd(G|N)|\le 2$, then $G$ is solvable.
\end{theorem}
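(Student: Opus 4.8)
The plan is to argue by contradiction through a minimal counterexample, isolating the role of a non-abelian chief factor: the hypothesis $|\cd(G|N)|\le 2$ should become untenable precisely because a non-abelian simple section carries too many distinct relative character degrees. First I would record the basic monotonicity of the relative degree set. If $M\le N$ are both normal in $G$, then $\irr(G/N)\subseteq\irr(G/M)$, hence $\irr(G|M)\subseteq\irr(G|N)$ and therefore $\cd(G|M)\subseteq\cd(G|N)$, so $|\cd(G|M)|\le|\cd(G|N)|\le 2$. Choosing a minimal normal subgroup $M$ of $G$ contained in $N$, it then suffices to derive a contradiction from the assumption that $G$ is non-solvable while $|\cd(G|M)|\le 2$. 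Since $M$ is minimal normal, $M\cong S^k$ for a simple group $S$ and some $k\ge 1$, which splits the argument into an abelian and a non-abelian case.

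The crux is the case where $S$ is non-abelian. Here I would show $|\cd(G|M)|\ge 3$ directly, contradicting the hypothesis. The mechanism is Clifford theory over $M$ combined with extendibility results for characters of simple groups: by CFSG-based facts on finite simple groups, $S$ possesses non-principal irreducible characters of distinct degrees that are invariant under, and extend to, $\Aut(S)$. Forming the $G$-invariant characters $\theta^{\times k}\in\irr(M)$ and extending them to $G$ produces characters of $G$ lying over $M$ whose degrees are $\theta_1(1)^k$ and $\theta_2(1)^k$, both exceeding $1$ and distinct. To force a third degree I would take a non-$\Aut(S)$-invariant $\theta_3\in\irr(S)$; then the $G$-orbit of $\theta_3^{\times k}$ has length $[G:\I_G(\theta_3^{\times k})]\ge 2$, so every irreducible constituent of $(\theta_3^{\times k})^G$ has degree divisible by this orbit length, and a divisibility/size comparison against $\theta_1(1)^k$ and $\theta_2(1)^k$ yields a genuinely new member of $\cd(G|M)$.

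The remaining case, $S$ abelian, makes $M$ an elementary abelian $p$-group on which $G$ acts irreducibly, with $G$ still non-solvable. Here I would study the $G$-action on $\irr(M)\setminus\{1_M\}$: each $\lambda\ne 1_M$ contributes degrees divisible by its orbit length $[G:\I_G(\lambda)]$, and the constraint $|\cd(G|M)|\le 2$ forces the orbit lengths and the inertia-quotient degrees to be extremely restricted. Using the non-solvability of $G$ — through the structure of $C_G(M)$, which together with the faithful action $G/C_G(M)\le\GL(M)$ must absorb a non-abelian composition factor, and through the classification of the irreducible linear groups compatible with so few orbits — I would again exhibit a third distinct degree, contradicting $|\cd(G|M)|\le 2$.

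The main obstacle is unquestionably the non-abelian case: guaranteeing three genuinely distinct values in $\cd(G|M)$ for an arbitrary overgroup $G$ of the minimal normal subgroup $M\cong S^k$. This rests on the CFSG input that every non-abelian simple group has enough non-principal extendible characters of distinct degrees, and on careful Clifford-theoretic bookkeeping (orbit length times inertia-quotient degree) to ensure the third degree does not accidentally coincide with the first two; the abelian case likewise ultimately leans on CFSG through the classification of the relevant irreducible linear groups.
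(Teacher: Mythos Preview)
The paper does not prove this statement at all: it is quoted as Theorem~B of Isaacs--Knutson \cite{Isaacs2} and invoked as a black box in Lemma~\ref{*-group}. There is therefore no in-paper argument to compare your proposal against.

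On the substance of your sketch, the principal gap is that you announce a minimal-counterexample argument but never actually use minimality. Having passed from $N$ to a minimal normal $M\le N$ with $|\cd(G|M)|\le 2$, you should also observe that $\irr(G/M\mid N/M)\subseteq\irr(G|N)$, so $|\cd(G/M\mid N/M)|\le 2$; minimality of $|G|$ then forces $G/M$ to be solvable, and since $G$ is not, $M$ itself is non-solvable. This single step eliminates the abelian case outright, and it is precisely the case your outline handles least convincingly: the paragraph on elementary abelian $M$ is essentially an aspiration (``the classification of the irreducible linear groups compatible with so few orbits'') rather than an argument, and nothing you wrote there converts non-solvability of $G$ into a third member of $\cd(G|M)$.

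In the non-abelian case your first two degrees $\theta_1(1)^k\neq\theta_2(1)^k$ are obtained correctly via $\Aut(S)$-extendible characters, but the third is not secured. Saying that a non-invariant $\theta_3$ yields a degree divisible by an orbit length $\ge 2$, followed by ``a divisibility/size comparison'', does not exclude the possibility that this new degree coincides with one of $\theta_1(1)^k$, $\theta_2(1)^k$; you need either a concrete arithmetic separation or a different source for the third degree (for instance, Gallagher over an extended $\theta_i^{\times k}$ together with non-triviality of $G/M$, or an explicit choice of $\theta_1,\theta_2,\theta_3$ with controlled degrees in each family of simple groups). The actual Isaacs--Knutson argument is more structural than a bare degree count on one chief factor.
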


\begin{lemma}\label{*-group}
	Let $G$ be a finite group. If $|\cod(G|G')|\le 2$, then $G$ is solvable.
\end{lemma}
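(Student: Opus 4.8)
I would argue by contradiction: take $G$ to be a counterexample of minimal order, so $G$ is nonsolvable while $|\cod(G|G')|\le 2$. The first goal is to push $G$ into the shape of a ``monolithic'' group built on a direct power of a nonabelian simple group. For every $N\unlhd G$ one has $\irr(G/N|(G/N)')=\irr(G/N)\cap\irr(G|G')$, hence $\cod(G/N|(G/N)')\subseteq\cod(G|G')$, so every proper quotient of $G$ again satisfies the hypothesis and is therefore solvable by minimality. If $G$ had two distinct minimal normal subgroups $N_1,N_2$ it would embed in $(G/N_1)\times(G/N_2)$ and be solvable; so $G$ has a \emph{unique} minimal normal subgroup $N$. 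If $N$ were abelian then $G/N$ solvable would force $G$ solvable; hence $N=S_1\times\cdots\times S_t$ with each $S_i\cong S$ a nonabelian simple group. Since $\Centralizer_G(N)\cap N=\Center(N)=1$ and $N$ is the only minimal normal subgroup, $\Centralizer_G(N)=1$, so $N\le G\le\Aut(N)$; moreover $[N,G]\le N\cap G'$ and the minimality of $N$ (with $N$ nonabelian) force $N\le G'$, while $G/N$ is solvable as a proper quotient. Note also that by Lemma~\ref{Qian} the case $|\cod(G|G')|\le 1$ is outright solvable, so in fact $|\cod(G|G')|=2$.

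Next I would exploit faithful characters lying over $N$. Fix $\varphi=\theta_1\times\cdots\times\theta_t\in\irr(N)$ with each $\theta_i\ne 1_{S_i}$, so $\ker\varphi=1$. If $\chi\in\irr(G)$ lies over $\varphi$, then $\ker\chi\cap N\le\ker\varphi=1$; since $N=\mathrm{soc}(G)$ contains every minimal normal subgroup of $G$, this forces $\ker\chi=1$, so $\chi$ is faithful and $\cod(\chi)=|G|/\chi(1)$; and $\chi\in\irr(G|G')$ because $G$ is nonabelian. By Clifford theory $\chi(1)/\varphi(1)$ divides $[G:N]$, so $\cod(\chi)=\cod_N(\varphi)\cdot f$ for some $f$ dividing $|G/N|$, where $\cod_N(\varphi)=\prod_{i}\bigl(|S_i|/\theta_i(1)\bigr)$.

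It now suffices to exhibit three faithful irreducible characters of $G$ of three distinct degrees: they will have three distinct codegrees lying in $\cod(G|G')$, contradicting $|\cod(G|G')|=2$. Applying Theorem~\ref{Solve} to $S$ (with normal subgroup $S$), the group $S$ has at least three distinct nontrivial character degrees. The plan is then to pick three nontrivial $\theta\in\irr(S)$ of distinct degrees, form $\varphi=\theta\times\cdots\times\theta$, and take the faithful characters $\chi$ produced above, arranging that their degrees are pairwise distinct. The only way two of these degrees can collide is if the ratio of the underlying $S$-degrees, raised to the $t$th power, is a quotient of divisors of $|G/N|$; but $G/N$ is solvable, so $|G/N|$ divides $|\Out(S)|^{\,t}$ times the order of a solvable subgroup of $\mathrm{Sym}(t)$ (using that $\Out(S)$ is solvable), and this is a strong restriction. \textbf{The main obstacle is precisely this last step}: verifying that for every nonabelian simple $S$ (including the small groups such as $A_5,A_6,A_7$ and small $\PSL(2,q)$, where the spacing of degrees is tight) one can always choose the three characters with distinct degrees. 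I expect this to require a case-by-case inspection, via the classification of finite simple groups, of the character-degree sets, outer automorphism groups, and smallest faithful degrees of the relevant (almost) simple groups. Once this is done, the three distinct codegrees contradict $|\cod(G|G')|\le 2$, and the lemma follows.
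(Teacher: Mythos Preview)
Your setup is correct and in fact contains everything needed, but you miss the direct finish and instead embark on an unnecessary construction that you yourself flag as incomplete. Once you have established that $N$ is the unique minimal normal subgroup and $N\le G'$, observe that \emph{every} $\chi\in\irr(G|N)$ is faithful: indeed $\ker\chi\unlhd G$ and $N\not\le\ker\chi$, so $\ker\chi=1$ (your own argument, applied without restricting to the special $\varphi$ with all components nontrivial). Hence for all $\chi\in\irr(G|N)$ one has $\cod(\chi)=|G|/\chi(1)$, and the map $\chi(1)\mapsto|G|/\chi(1)$ gives a bijection
\[
\cd(G|N)\longleftrightarrow\cod(G|N)\subseteq\cod(G|G').
\]
Therefore $|\cd(G|N)|=|\cod(G|N)|\le 2$, and Theorem~\ref{Solve} applied to the pair $(G,N)$ immediately yields that $G$ is solvable, a contradiction. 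This is exactly the paper's argument.

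Your detour---producing three explicit faithful characters of distinct degrees by inducing from diagonal $\varphi=\theta\times\cdots\times\theta$---is not needed, and the ``main obstacle'' you identify is genuine: controlling the degree $\chi(1)=|G:\I_G(\varphi)|\,\psi(1)$ for $\psi\in\irr(\varphi^{\I_G(\varphi)})$ well enough to guarantee three distinct values really would require nontrivial case analysis (the inertia index and the degrees of the Clifford correspondents can vary with $\theta$). The point is that you invoked Theorem~\ref{Solve} only for the simple factor $S$, when the theorem is designed precisely for the pair $(G,N)$ you already have in hand.
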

\begin{proof}
	On the contrary, assume that $G$ is a counterexample of minimal order. If $G$ is  simple, then $|\cd(G)|=|\cod(G|G')|+1\le 3$, which contradicts \cite[Theorems 12.6 and 12.15]{Isaacs}. Hence, $G$ is not simple. Notice that  by the fact that $G$  is a minimal counterexample, $G$ has a unique minimal normal subgroup, say $K$, which is non-solvable. Observe that $K\le G'$, and so $\irr(G|K)\subseteq \irr(G|G')$ and each character in $\irr(G|K)$ is faithful, and so $|\cd(G|K)|=|\cod(G|K)|\le |\cod(G|G')|\le 2$. Now Theorem \ref{Solve} shows that $G$ is solvable, which is a contradiction. 
\end{proof}
The proof of the previous lemma is a mimic of the proof of \cite[Theorem 2.3]{Zeng}, and it also shows that $G$ is solvable, if $|\cod(G|G')|\le 2$.

By Lemma \ref{*-group}, every *-group is solvable.\\

\begin{lemma}\label{nil}
	Let $G=C_p\times L$, where $L$ is a non-abelian $p'$-group, for some prime $p$. Then there exists $\chi, \psi \in \irr(G|G')$ such that $\cod(\psi)=p\cdot \cod(\chi)$, and so $(\cod(\chi),\cod(\psi))\ne 1$.
\end{lemma}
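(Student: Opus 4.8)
The plan is to exploit the direct-product structure of $G$ directly and simply exhibit the two characters by hand. Since $C_p$ is abelian, $G'=L'$, so $\irr(G|G')=\irr(G|L')$ is exactly the set of nonlinear members of $\irr(G)$. By the standard parametrization of irreducible characters of a direct product, every $\mu\in\irr(G)$ has the form $\mu=\lambda\times\tau$ with $\lambda\in\irr(C_p)$ and $\tau\in\irr(L)$, and $\mu(1)=\lambda(1)\tau(1)=\tau(1)$; hence $\mu$ is nonlinear precisely when $\tau$ is nonlinear. As $L$ is non-abelian I may fix a nonlinear $\tau\in\irr(L)$, and since $C_p$ is cyclic of prime order I may fix a faithful linear character $\lambda\in\irr(C_p)$ (a generator of $\irr(C_p)$).

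Next I would take $\chi:=1_{C_p}\times\tau$ and $\psi:=\lambda\times\tau$, where $1_{C_p}$ is the trivial character of $C_p$; both are nonlinear (their common degree is $\tau(1)>1$), so $\chi,\psi\in\irr(G|G')$. The kernels split along the product: $\ker\chi=\ker(1_{C_p})\times\ker\tau=C_p\times\ker\tau$, while $\ker\psi=\ker\lambda\times\ker\tau=1\times\ker\tau$ because $\lambda$ is faithful. Therefore $|G:\ker\chi|=|L:\ker\tau|$ and $|G:\ker\psi|=p\,|L:\ker\tau|$, and dividing both by the common degree $\tau(1)$ gives $\cod(\chi)=|L:\ker\tau|/\tau(1)$ and $\cod(\psi)=p\cdot|L:\ker\tau|/\tau(1)=p\cdot\cod(\chi)$.

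Finally, since $\chi$ is nonlinear, in particular non-trivial, its codegree satisfies $\cod(\chi)>1$; combined with $\cod(\psi)=p\cdot\cod(\chi)$ this yields $\cod(\chi)\mid\cod(\psi)$, hence $\gcd(\cod(\chi),\cod(\psi))=\cod(\chi)>1$, which is the claim. I do not expect any serious obstacle in this argument: the only points that genuinely need a line of justification are the behaviour of kernels under the $\times$-product of characters of a direct product, and the elementary fact that a non-trivial irreducible character has codegree strictly greater than $1$ (which follows because, writing $\bar G=G/\ker\chi$, the faithful irreducible character induced by $\chi$ on $\bar G$ satisfies $\chi(1)^2<|\bar G|=\chi(1)\,\cod(\chi)$ by the column-orthogonality relation, forcing $\cod(\chi)>\chi(1)\ge 1$).
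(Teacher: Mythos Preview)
Your proof is correct and follows essentially the same approach as the paper's: both pick a nonlinear $\tau\in\irr(L)$ and a faithful $\lambda\in\irr(C_p)$, set $\chi=1_{C_p}\times\tau$ and $\psi=\lambda\times\tau$, and compute the two codegrees directly from the product structure. The one point worth making explicit is that the splitting $\ker(\lambda\times\tau)=\ker\lambda\times\ker\tau$ relies on $\gcd(p,|L|)=1$ (so that a $p$-th root of unity and a $|L|$-th root of unity multiply to $1$ only when both equal $1$); the paper uses this implicitly as well.
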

\begin{proof}
	Assume that $\chi\in \irr(L|L')$ and $\phi\in \irr(C_p|C_p)$. Let $\psi=\phi\times \chi$. Obviously $\cod(\chi)$ and $\cod(\psi)$ belong to $\cod(G|G')$. We can see that $\cod(\psi)=\frac{|G:1\times \ker\chi |}{\psi(1)}=\frac{p|L:\ker\chi|}{\chi(1)}=p\cdot\cod(\chi)$.
\end{proof}

\begin{lemma}\label{nilp}
	Each *-group is non-nilpotent.
\end{lemma}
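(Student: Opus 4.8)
The plan is to argue by contradiction. Suppose $G$ is a nilpotent *-group, so $\cod(G|G')=\{m,n\}$ with $m\neq n$ and $\gcd(m,n)=1$. Since $\irr(G|G')\neq\emptyset$, the group $G$ is non-abelian, and being nilpotent it is the internal direct product $G=P_1\times\cdots\times P_k$ of its Sylow subgroups, where $P_i\in\syl_{p_i}(G)$ for distinct primes $p_1,\dots,p_k$; non-abelianness forces at least one factor, say $P_1$, to be non-abelian. I would then split according to whether $k=1$ or $k\geq 2$.

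If $k=1$, then $G$ is a $p_1$-group, so every codegree $\cod(\chi)=|G:\ker\chi|/\chi(1)$ divides $|G|$ and is a power of $p_1$; moreover it is a \emph{positive} power whenever $\chi$ is non-linear, since $\chi(1)^2\leq|G:\ker\chi|$ gives $\cod(\chi)\geq\chi(1)>1$. Hence $m$ and $n$ would both be positive powers of $p_1$, contradicting $\gcd(m,n)=1$.

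If $k\geq 2$, set $R:=P_2\times\cdots\times P_k$, a nontrivial $p_1'$-group, so that $G=P_1\times R$. Since $R/R'$ is a nontrivial abelian group it has a quotient isomorphic to $C_q$ for some prime $q$ (necessarily $q\neq p_1$); picking $R_0\trianglelefteq R$ with $R/R_0\cong C_q$, we get $R_0\trianglelefteq G$ and $G/R_0\cong C_q\times P_1$ with $P_1$ a non-abelian $q'$-group. Lemma \ref{nil}, applied to $G/R_0$, then yields $\bar\chi,\bar\psi\in\irr(G/R_0\,|\,(G/R_0)')$ with $\cod(\bar\psi)=q\cdot\cod(\bar\chi)$. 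Inflating $\bar\chi,\bar\psi$ to $G$ via the quotient map $G\to G/R_0$ produces $\chi,\psi\in\irr(G)$ whose kernels contain $R_0$; since inflation preserves both the degree and the relative index $|G:\ker(\,\cdot\,)|$, one checks $\chi(1)=\bar\chi(1)>1$ and $\psi(1)=\bar\psi(1)>1$ (so $\chi,\psi\in\irr(G|G')$) and $\cod(\psi)=q\cdot\cod(\chi)$. Thus $\cod(\chi)$ and $\cod(\psi)$ are two distinct elements of $\cod(G|G')$ sharing the factor $q$, contradicting the *-group hypothesis; therefore no *-group is nilpotent.

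I do not expect a serious obstacle here. The two points needing a little care are: verifying that Lemma \ref{nil} genuinely applies to $G/R_0$ once it is rewritten in the form $C_q\times(\text{non-abelian }q'\text{-group})$, and the routine check that inflation from a quotient preserves codegrees and non-linearity. The $p$-group case must be handled on its own, since Lemma \ref{nil} is unavailable there, but the elementary divisibility argument settles it immediately.
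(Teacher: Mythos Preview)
Your argument is correct and follows essentially the same route as the paper: handle the $p$-group case directly (the paper dismisses this with ``Clearly, $|\pi(G)|\ge 2$''), then realize $C_q\times P_1$ as a quotient of $G$ and invoke Lemma~\ref{nil}. One small slip: in the last step you say $\cod(\chi)$ and $\cod(\psi)$ share the factor $q$, but $\cod(\chi)=\cod(\bar\chi)$ is a nontrivial power of $p_1$ and is coprime to $q$; the common factor is $\cod(\chi)$ itself (equivalently $p_1$), which still gives $\gcd(\cod(\chi),\cod(\psi))>1$ and the desired contradiction.
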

\begin{proof}
	On the contrary, assume that $G$ is a nilpotent *-group.  Clearly, $|\pi(G)|\geq 2$. Let $\{p,q\}\subseteq \pi(G)$ and  $Q\in\syl_q(G)$ be a non-abelian Sylow $q$-subgroup of $G$.	
	 We know that $\cod(C_p\times Q)\subseteq \cod(G)$, and by Lemma \ref{nil}, we get that $C_p\times Q$ is not a *-group, implying 
	  that $G$ is not a *-group, a contradiction. 
\end{proof}

\begin{lemma}\label{Fro}
	Let $G=N\rtimes H$ be a Frobenius group with Frobenius kernel $N$. If $|\pi(N)|>1$, then $G$ is not a *-group.
\end{lemma}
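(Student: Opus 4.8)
The plan is to argue by contradiction. Suppose $G=N\rtimes H$ is a Frobenius *-group with Frobenius kernel $N$ and $|\pi(N)|>1$; I will produce three distinct elements of $\cod(G|G')$, contradicting $|\cod(G|G')|=2$. Since $N$ is nilpotent with at least two prime divisors, write $N=P_1\times\cdots\times P_r$ with $r\ge 2$, where $P_i\in\syl_{p_i}(N)$; each $P_i$ is characteristic in $N$, hence normal in $G$. Because $H$ acts fixed-point-freely (and coprimely) on $N$, we get $N=[N,H]\le G'$, so $\irr(G|N)\subseteq\irr(G|G')$; moreover $H$ fixes no nontrivial conjugacy class of $N$, hence by Brauer's permutation lemma it permutes $\irr(N)\setminus\{1_N\}$ with trivial stabilizers, so $\I_G(\theta)=N$ and $\theta^G\in\irr(G|N)$ for every nontrivial $\theta\in\irr(N)$.

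Next I would derive a formula for $\cod(\theta^G)$. From $(\theta^G)_N=\sum_{h\in H}\theta^h$ one gets $\ker(\theta^G)\cap N=\bigcap_{g\in G}(\ker\theta)^g=:\mathrm{Core}_G(\ker\theta)$, a proper $G$-invariant subgroup of $N$; since every normal subgroup of the Frobenius group $G$ is comparable with $N$ while $N\not\le\ker(\theta^G)$, in fact $\ker(\theta^G)=\mathrm{Core}_G(\ker\theta)\le N$, and therefore $\cod(\theta^G)=|N:\mathrm{Core}_G(\ker\theta)|/\theta(1)$. Writing $\theta=\prod_i\theta_i$ with $\theta_i\in\irr(P_i)$ and using that each $P_i$ is normal in $G$, this factors as $\cod(\theta^G)=\prod_{i=1}^{r}\bigl(|P_i:\mathrm{Core}_G(\ker\theta_i)|/\theta_i(1)\bigr)$, whose $i$-th factor is a power of $p_i$, and exceeds $1$ precisely when $\theta_i\ne 1_{P_i}$ (positivity follows from $\theta_i(1)^2<|P_i:\ker\theta_i|\le|P_i:\mathrm{Core}_G(\ker\theta_i)|$, or from Lemma \ref{subnormality} applied to $P_i\trianglelefteq G$).

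Finally I would select nontrivial $\mu_1\in\irr(P_1)$ and $\mu_2\in\irr(P_2)$ and form the characters $\theta^{(1)},\theta^{(2)},\theta^{(12)}\in\irr(N)$ obtained from $\mu_1$, from $\mu_2$, and from $\mu_1\mu_2$ by placing the trivial character on all remaining factors. By the formula above, $\cod((\theta^{(1)})^G)$ is a power of $p_1$ bigger than $1$, $\cod((\theta^{(2)})^G)$ is a power of $p_2$ bigger than $1$, and $\cod((\theta^{(12)})^G)=\cod((\theta^{(1)})^G)\cdot\cod((\theta^{(2)})^G)$, in particular divisible by both $p_1$ and $p_2$. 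As $p_1\ne p_2$, these are three pairwise distinct elements of $\cod(G|N)\subseteq\cod(G|G')$, so $|\cod(G|G')|\ge 3$, contradicting that $G$ is a *-group.

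I expect the only step requiring care to be the identification $\ker(\theta^G)=\mathrm{Core}_G(\ker\theta)\le N$; it rests on two standard facts about a Frobenius group — the centralizer of any nonidentity element of the kernel lies in the kernel, and every normal subgroup is comparable with the kernel — together with the remark that $\I_G(\theta)=N$ for nontrivial $\theta\in\irr(N)$. The remaining ingredients, namely the multiplicativity of the codegree formula over the direct decomposition $N=P_1\times\cdots\times P_r$ and the concluding count, are routine.
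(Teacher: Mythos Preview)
Your proof is correct and follows essentially the same route as the paper: decompose the nilpotent kernel $N$ into its Sylow factors, induce nontrivial characters of $N$ to $G$, and read off the prime support of the resulting codegrees via $\cod(\theta^G)=|N:\mathrm{Core}_G(\ker\theta)|/\theta(1)$. The only cosmetic difference is the final contradiction---the paper exhibits two distinct non-coprime codegrees (a pure $p$-power from $\psi\times 1_T$ and a mixed one from $\psi\times\phi$), while you exhibit three pairwise distinct codegrees; the underlying construction is the same, and your write-up is in fact more careful about the details (e.g.\ $N\le G'$, $\I_G(\theta)=N$, $\ker(\theta^G)\le N$) than the paper's.
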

\begin{proof}
		Assume that $N=P\times T$, where $P\in \syl_p(G)$ and $T$ is the normal $p'$-Hall of $N$. Let $\psi\in\irr(P/P')$, $\phi\in\irr(T/T')$ and $\chi=\psi\times \phi$. Obviously, $\cod(\psi^G)=\frac{|G:\ker\psi^G|}{|G:N|}=|N:\ker\psi^G|$, now since $T\le \ker\psi^G$, we get that $\cod(\psi^G)$ is a power of $p$. Now since $P\nless \ker\psi^G$ and $T\nless \ker\chi^G$, we get that $\cod(\chi^G)=|N:\ker\chi^G|$ has a prime divisor other than $p$, a contradiction.
\end{proof}

As a quick note, let $G=Q_{2^k}$ be a generalized quaternion group of order $2^k$, where $k\ge 3$. We know that $\cd(G)=\{1,2\}$ and $G$ has a faithful character, so $2^{k-1}\in\cod(G|G')$. Now as ${\bf Z}(G)<G'$, $G$ has a non-faithful non-linear character and so there exists   $\alpha\in\cod(G|G')$, where $\alpha\ne 2^{k-1}$. So $|\cod(G|G')|=1$ if and only if $G\cong Q_8$. This fact is used in the following lemma.

\begin{lemma}\label{Frob}
	If $G$ is a Frobenius *-group, then $G\cong C_p^k\rtimes Q_8$, for some prime $p$ and integer $k\ge 2$.
\end{lemma}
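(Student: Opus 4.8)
The plan is to split the non-linear codegrees of $G$ according to the Frobenius kernel, to identify the Frobenius complement as $Q_{8}$, and then to force the kernel to be elementary abelian of rank at least $2$.

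Write $G = N \rtimes H$ with $N$ the Frobenius kernel. Since $G$ is a *-group, Lemma~\ref{Fro} forces $|\pi(N)| = 1$, so $N$ is a $p$-group for some prime $p$; hence $p \nmid |H|$, and $N \le G'$, so $G/N \cong H$ and $(G/N)' = G'/N$. Because $H$ acts fixed-point-freely on $\irr(N)$, every nonprincipal $\lambda \in \irr(N)$ has inertia group $N$, so $\lambda^{G} \in \irr(G)$ and $\irr(G|N) = \{\lambda^{G} : 1 \ne \lambda \in \irr(N)\}$. For such $\lambda$ one has $\ker(\lambda^{G}) = \mathrm{core}_{G}(\ker\lambda) \le N$, whence $\cod(\lambda^{G}) = |N : \mathrm{core}_{G}(\ker\lambda)|$ is a power of $p$, and it is $\ne 1$ since $\lambda^{G}$ is nonprincipal. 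As $N \le G'$ we have $\cod(G|N) \subseteq \cod(G|G')$, and the latter set consists of pairwise coprime integers, so all these powers of $p$ coincide; since $N \ne 1$ the set is nonempty, so $\cod(G|N) = \{p^{a}\}$ for some $a \ge 1$. Moreover the non-linear characters of $G$ having $N$ in their kernel are exactly the members of $\irr(G/N|(G/N)')$, and their codegrees (computed in $G$ or in $G/N$, which agree) give precisely $\cod(H|H')$; hence $\cod(G|G') = \{p^{a}\} \cup \cod(H|H')$. Since $p \nmid |H|$ this union is disjoint, so $|\cod(G|G')| = 2$ yields $|\cod(H|H')| = 1$; in particular $H$ is non-abelian.

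Next I would pin down $H$. By Lemma~\ref{Qian}, a non-abelian group $H$ with $|\cod(H|H')| = 1$ is either a $q$-group or a Frobenius group $H = H' \rtimes R$ with $H' \cong C_{q}^{\ell}$ and $R$ cyclic. But $H$ is a Frobenius complement, so its Sylow subgroups are cyclic or generalized quaternion and it has no non-cyclic subgroup of order a product of two primes. The Frobenius possibility for $H$ is therefore excluded: if $\ell \ge 2$ then $H$ contains $C_{q} \times C_{q}$, while if $\ell = 1$ then (as $H$ is non-abelian) $R$ acts nontrivially and faithfully on $C_{q}$, so $H$ contains a non-cyclic subgroup $C_{q} \rtimes C_{r}$ for a suitable prime $r \mid |R|$ --- both contradicting the structure of Frobenius complements. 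Hence $H$ is a non-abelian $q$-group, so $q = 2$ and $H$ is generalized quaternion; and by the observation preceding this lemma, $|\cod(Q_{2^{m}}|Q_{2^{m}}')| = 1$ forces $Q_{2^{m}} \cong Q_{8}$. Thus $H \cong Q_{8}$.

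Finally I would determine $N$. The unique involution $z \in Q_{8}$ acts fixed-point-freely on $N$ and $z^{2} = 1$, so $z$ inverts $N$, whence $N$ is abelian. If $N$ were not elementary abelian it would admit a quotient $N/L \cong C_{p^{2}}$; pick $L'$ with $L \le L' \le N$ and $|N : L'| = p$. The nonprincipal characters of $N$ with kernels $L$ and $L'$ induce to irreducible characters of $G$ of codegree $p^{a}$ each, so $|N : \mathrm{core}_{G}(L)| = |N : \mathrm{core}_{G}(L')|$; as $L \le L'$ this gives $\mathrm{core}_{G}(L) = \mathrm{core}_{G}(L') =: M$. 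But $M$ is the intersection of the $G$-conjugates of $L'$, each of index $p$ in $N$, so $N/M$ is elementary abelian, contradicting that $N/M$ maps onto $N/L \cong C_{p^{2}}$. Hence $N \cong C_{p}^{k}$ for some $k$, and $k \ge 2$ because the non-abelian group $Q_{8}$ acts faithfully on $N$ and cannot do so on a group of rank $1$. Altogether $G \cong C_{p}^{k} \rtimes Q_{8}$. (In fact $|\cod(G|N)| = 1$ forces $N$ to be an irreducible faithful $\mathbb{F}_{p}[Q_{8}]$-module, so $k = 2$, which is sharper than needed.) I expect the main work to lie in this last paragraph --- extracting the \emph{elementary abelian} property from the single-codegree condition --- and in the bookkeeping with Frobenius-complement structure that rules out the Frobenius possibility for $H$.
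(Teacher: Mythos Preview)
Your proof is correct and follows the paper's outline through the identification $H\cong Q_8$: both proofs use Lemma~\ref{Fro} to make $N$ a $p$-group, observe that every codegree coming from $\irr(G|N)$ is a $p$-power, deduce $|\cod(H|H')|=1$, and then invoke Lemma~\ref{Qian} together with the structure of Frobenius complements to force $H\cong Q_8$. (You spell out why a Frobenius complement cannot itself be a Frobenius group, which the paper simply asserts.)

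The genuine difference is in the last step, showing $N$ is elementary abelian. The paper argues by induction on $|G|$: if $N$ is not minimal normal, either $G$ has a unique minimal normal subgroup $M$ (and one exhibits two distinct $p$-power codegrees $|N|$ and $|N:\ker\psi^G|<|N|$), or there are two minimal normals $M_1,M_2$, and induction applied to $G/M_i$ gives $\Phi(N)\le M_1\cap M_2=1$. Your argument is direct: from a hypothetical quotient $N/L\cong C_{p^2}$ you pick $L\le L'$ with $|N:L'|=p$, use the single-codegree condition to get $\mathrm{core}_G(L)=\mathrm{core}_G(L')=:M$, and then observe that $N/M$ is simultaneously elementary abelian (as an intersection of index-$p$ subgroups) and surjects onto $C_{p^2}$. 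Your route avoids induction and is entirely self-contained; the paper's route is shorter once the inductive framework is in place and makes the minimal-normal structure of $G$ more visible. One small remark: your formula $\cod(\lambda^G)=|N:\mathrm{core}_G(\ker\lambda)|$ is stated before $N$ is known to be abelian, so strictly it should carry a factor $1/\lambda(1)$; this does not affect the conclusion that the codegree is a $p$-power, and by the time you use the exact formula in the final paragraph $N$ is already abelian. Your parenthetical observation that in fact $k=2$ (since $N$ must be an irreducible $\mathbb{F}_p[Q_8]$-module) is sharper than what the lemma states.
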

\begin{proof}
	Let $G=N\rtimes H$, where $N$  and $H$ are  the Frobenius kernel and Frobenius complement, respectively. By Lemma \ref{Fro}, $N$ is a $p$-group, for some prime $p$. We know that for every $1_N\ne\phi\in\irr(N)$, $p$ divides $\cod(\phi^G)=|N:\ker\phi^G|/\phi(1)$. Therefore $|\cod(H|H')|=1$. Since $H$ is not a Frobenius group, by Theorem \ref{Qian}, $H$ is a $q$-group, for some prime $q$. Now since $H$ is non-cyclic, as we discussed above $H\cong Q_8$, which yields that $N$ is abelian. If $N$ is a minimal normal subgroup, then $N$ is elementary abelian $p$-group, and we are done. So we may assume that $N$ is not minimal normal.  Assume that $G$ has a unique minimal normal subgroup $M$.  Choose $\theta\in\irr(N|M)$ and $\psi\in\irr(N/M)$. Obviously, $\theta^G$ is faithful and we can see that
	$$\cod(\theta^G)=\frac{\sg}{|G:N|}=|N|\quad \text{and} \quad \cod(\psi^G)=\frac{|G:\ker\psi^G|}{|G:N|}=|N:\ker\psi^G|<|N|,$$
	which is a contradiction.	
	Hence, there exists minimal normal subgroups $M_1$ and $M_2$. By induction $N/M_1$ and $N/M_2$ are elementary abelian, which means $\Phi(N)\le M_1\cap M_2=1$ and $N$ is elementary abelian.
	\end{proof}
	
		We recall that a finite group $G$ is called a \textit{$2$-Frobenius group}, if $G$ has a normal series $1<K<H<G$, such that $G/K$ is a Frobenius group with $H/K$ as the Frobenius kernel, and $K$ is the Frobenius kernel of the Frobenius group $H$. Recall that in such a $2$-Frobenius group,  normal subgroups are either contained in $K$ or contain $H$, this fact is useful in the following Theorem.

	\begin{theorem}\label{2Fro}
		Let $G$ be a $2$-Frobenius group. Let  $1<K<H<G$ be a normal series such that $G/K=H/K\rtimes R$ and $H=K\rtimes J$ are  Frobenius groups with  Frobenius complements $R\leq G/K$ and    $J\leq H$, respectively. Then $G$ is an $*$-group if and only if  all  the following occurs: 
		\begin{enumerate}
			\item $G/H\cong R$ is cyclic. 
			\item $H/K\cong J\cong C_p$, for some prime $p$;
			 \item $K$ is the unique minimal normal subgroup of $G$;
			\item For each $1\not =x\in K$, $C_G(x)/K\cong R_0$, 	where $R_0\le R$, and  $\pi(R_0)=\pi(R)$.
		\end{enumerate}
	Moreover, in this case $\cod(G|G')=\{p,|K||R_0|\}$. 
	\end{theorem}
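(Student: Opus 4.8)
This is an equivalence, and I would treat the two directions separately: the reverse implication by a direct computation of $\cod(G|G')$, and the forward one by an induction on $|G|$ that leans on the already-established classification of Frobenius $*$-groups (Lemmas \ref{Qian} and \ref{Frob}). A recurring structural remark is that once $R$ is cyclic one has $(G/K)'=H/K$ and hence $G'=H$, so that $\irr(G|G')$ splits into $\irr(G|K)$ together with the non-linear characters of the Frobenius group $G/K\cong C_p\rtimes R$; a second is that $\Fit(G)=K$ (in a 2-Frobenius group a normal subgroup lies in $K$ or contains $H$, and $H$ is non-nilpotent), so every minimal normal subgroup of $G$ sits inside $K$, and $\Centralizer_G(K)=\Center(K)$, which equals $K$ once $K$ is abelian. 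Throughout I use the remark from Section \ref{preliminary results} that $\I_G(\theta)=\Centralizer_G(x)$ for matching $\theta$, $x$ when $K$ is elementary abelian.

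For sufficiency, assume (1)--(4). Since $G$ is solvable and $K$ is its unique minimal normal subgroup, $K$ is elementary abelian, hence self-centralizing. Because $R$ is cyclic, the non-linear characters of $G/K\cong C_p\rtimes R$ are precisely the $\lambda^{G/K}$ with $1\ne\lambda\in\irr(H/K)$, each of degree $|R|$ and codegree $|H/K|=p$. For $\chi\in\irr(G|K)$ over $1\ne\theta\in\irr(K)$, the Frobenius action of $H/K\cong C_p$ on $\irr(K)$ gives $\I_H(\theta)=K$, so with $T:=\I_G(\theta)=\Centralizer_G(x)$ one has $T/K\hookrightarrow G/H\cong R$, whence $T/K$ is cyclic, $\theta$ extends to $T$, and $\chi(1)=|G:T|$; minimality of $K$ gives $\ker\chi\cap K=1$, and since $\ker\chi\le\bigcap_g T^g=\Centralizer_G(K)=K$ we get $\ker\chi=1$, so $\cod(\chi)=|G|/|G:T|=|T|=|K||R_0|$ by (4). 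Hence $\cod(G|G')=\{p,|K||R_0|\}$; these are coprime since $p\nmid|K|$ (Frobenius $H$ forces $|K|\equiv1\pmod p$) and $p\nmid|R_0|$ (as $R_0\le R$ and $\gcd(|H/K|,|R|)=1$), so $G$ is a $*$-group with the claimed codegree set.

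For necessity, $G$ is solvable by Lemma \ref{*-group}. The non-linear characters of $G/K$ inflate into $\irr(G|G')$, so $|\cod(G/K|(G/K)')|\le2$, and it is nonzero ($G/K$ is non-abelian). It is not $2$: otherwise $G/K$ is a Frobenius $*$-group, so $G/K\cong C_t^m\rtimes Q_8$ by Lemma \ref{Frob}, forcing $J\cong H/K\cong C_t^m$ to be a Frobenius complement, hence $m=1$, and then $Q_8\cong R$ acts Frobeniusly on $C_t$, impossible as $\Aut(C_t)$ is cyclic. So it equals $1$, and Lemma \ref{Qian} applied to the non-abelian Frobenius group $G/K$ gives that $R$ is cyclic --- this is (1) --- and $H/K=(G/K)'$ is elementary abelian, whence, $J\cong H/K$ being a Frobenius complement, $H/K\cong C_p$ for a prime $p$ --- this is (2) --- and $p\in\cod(G|G')$. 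Write $\cod(G|G')=\{p,b\}$ with $(p,b)=1$. For $1\ne\theta\in\irr(K)$, $\I_H(\theta)=K$ gives $\theta^H\in\irr(H)$, a constituent of $\chi_H$ for suitable $\chi\in\irr(G|\theta)$; since $\cod(\theta^H)$ divides $|K|$ and hence is coprime to $p$, Lemma \ref{subnormality} forces $\cod(\chi)=b$, so $\cod(G|K)=\{b\}$ (and $\irr(G|K)\subseteq\irr(G|G')$ as $K\le G'$). Now (3): induct on $|G|$. If $K$ is not minimal normal, pick $N<K$ minimal normal in $G$; then $G/N$ is again a 2-Frobenius group whose non-linear codegree set has two elements (size $1$ is barred by Lemma \ref{Qian}, since $G/N$ is neither a $p$-group nor Frobenius), hence is a smaller $*$-group to which the inductive hypothesis applies, giving $b=|K/N|\,|\overline{R_0}|$ with $\overline{R_0}\le R$. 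Combining this with $|N|\mid b$ (Lemma \ref{2.7}: characters over $N$ lie in $\irr(G|K)$ and have trivial kernel on $N$) and the facts that $C_p\cong J$ acts Frobeniusly on $K$ (so $|N|\equiv1\pmod p$, $|N|\ge p+1$) and on $H/K$ (so $|R|\mid p-1<|N|$) leads to a contradiction --- the decisive case being when the chief factors $N$ and $K/N$ lie over different primes, where $\gcd(|N|,|K/N|)=1$ forces $|N|\mid|\overline{R_0}|\le|R|<|N|$. Hence $K$ is minimal normal, so elementary abelian and self-centralizing. Finally, (4) and the formula: every $\chi\in\irr(G|K)$ is now faithful (as in the sufficiency argument), so $\chi(1)=|G|/b$ is constant; as $R$ is cyclic $\theta$ extends to $\Centralizer_G(x)=\I_G(\theta)$, so $\chi(1)=|G:\Centralizer_G(x)|$, whence $|\Centralizer_G(x)|=b$ is independent of $x$ and $\Centralizer_G(x)/K$ is the unique subgroup $R_0$ of the cyclic group $R$ of order $b/|K|$, giving $b=|K||R_0|$. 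For $\pi(R_0)=\pi(R)$: let $s\in\pi(R)$; if $s\nmid|K|$ then $s\nmid|R_0|$ would make $s$ divide no codegree, so by Theorem \ref{Qian1} a Sylow $s$-subgroup acts Frobeniusly on $G'=H$, making $H$ a Frobenius kernel --- impossible, as $H$ is a non-nilpotent Frobenius group; and if $s\mid|K|$, then $K$ is an $s$-group and a Sylow $s$-subgroup of $G/K$, being an $s$-group acting on the $s$-group $K$, fixes a nonzero element, forcing $s\mid|R_0|$. Thus $\pi(R_0)=\pi(R)$.

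The delicate point is step (3): ruling out a proper minimal normal subgroup $N\subsetneq K$. The reduction to the smaller $*$-group $G/N$ is routine, but extracting a contradiction from the two descriptions of $b$ --- one through the chief factor $N$ of $G$, one through the chief factor $K/N$ of $G/N$ --- requires careful bookkeeping of the primes and exponents in $|N|$, $|K/N|$, $|R|$ and $b$; the governing inequalities are that $C_p\cong J$ acting Frobeniusly forces $|N|,|K/N|\equiv1\pmod p$ while $|R|\mid p-1$, so $|R|$ is too small to carry what $b$ would have to be. When $N$ and $K/N$ lie over the same prime these crude bounds no longer suffice, and a finer module-theoretic analysis of $K$ as a faithful $\mathbb{F}[G/K]$-module --- possibly via a counting argument in the spirit of Theorem \ref{sak} --- seems to be what is needed. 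A secondary subtlety, isolated above, is that $\pi(R_0)=\pi(R)$ genuinely uses the non-nilpotency of the Frobenius group $H$.
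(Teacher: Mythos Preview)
Your sufficiency argument and your treatment of (1), (2), (4), and the identity $\cod(G|G')=\{p,|K||R_0|\}$ are correct and in substance match the paper's proof; the paper's sufficiency is essentially your computation, and its derivation of (1)--(2) invokes Lemmas \ref{Qian} and \ref{Frob} just as you do. One small inaccuracy: the equality $\bigcap_g T^g=\Centralizer_G(K)$ is not literally what you need. The clean argument is that $\ker\chi\unlhd G$ meets $K$ trivially (uniqueness of $K$ as minimal normal), hence $[\ker\chi,K]=1$, hence $\ker\chi\le\Centralizer_G(K)=K$, hence $\ker\chi=1$; you reach the right conclusion but via a slightly misstated intermediate step.

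The genuine gap is exactly where you flag it: step (3) when the chief factors $N$ and $K/N$ lie over the same prime $q$. Your inductive setup is sound, and your different-prime case is clean --- cleaner, in fact, than the paper's direct argument that $K$ is a $q$-group --- but the same-prime case cannot be closed by the size comparisons $|N|\mid b$ and $|R|\mid p-1$ alone. The paper's argument here is substantially longer than the rest of the proof combined. Working in $G/L$ for a chief factor $K/L$, it first shows $\beta=|K/L||R_0|$ with $|R_0|_q=|R|_q$ (via a Sylow fixed-point argument like the one you use for $\pi(R_0)=\pi(R)$), and deduces that every chief factor $K/W$ has the same order. It then applies Casolo's counting result (Theorem \ref{sak}) to the action of $C_p\rtimes R_0$ on $K/L$: since every nontrivial element of $K/L$ has centralizer conjugate to $R_0$, the pair satisfies condition $N_s$ for each $s\in\pi(R_0)$, yielding a proper subgroup $K_0/L<K/L$ with $(|K/L|-1)/(|K_0/L|-1)=p$. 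Assuming $L\neq 1$, one takes a further chief factor $L/L_0$ and splits into two cases according to whether every $\chi$ lying over a nontrivial $\lambda\in\irr(L/L_0)$ is faithful modulo $L_0$. In the faithful case a degree-sum argument (summing $\theta(1)^2$ over $\irr(\lambda^{\I})$ and tracking $q$-parts) forces $|L/L_0|^3$ to divide the $q$-part of an inertia group; combined with $p\mid|L/L_0|-1$, $|R|_q\mid p-1$, and the Casolo equation, this forces $|K/L|-1$ to have no primitive prime divisor, and Zsigmondy's theorem leaves only the exceptional parameters $|K/L|\in\{q^2\text{ with }q\text{ Mersenne},\,2^6\}$, each eliminated by hand. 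In the non-faithful case one shows $|K/L|=|L/L_0|$, analyses inertia groups of characters of $K/L_0\cong(L/L_0)\times(\ker\chi/L_0)$, and reaches a contradiction by computing codegrees of suitable induced characters. So your instinct that Theorem \ref{sak} is the missing ingredient is correct, but it must be paired with the degree-sum trick and Zsigmondy to finish.
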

	\begin{proof}
		
		First assume that $G$ is a $2$-Frobenius group with all the mentioned  hypothesis. Then $G'=H$ and every non-linear character $\chi \in \irr(G)$,   either $\chi \in \irr(G/K|H/K)$ or $\chi \in \irr(G|K)$. If the former case occurs, then  $\cod(\chi)=p$. Now,  assume the later case occurs. Then $\chi=\theta^G$, where $\theta\in I_G(\lambda)$ and $\lambda\in \irr(K)$ is non-principal.  Thus $\cod(\chi)=|I_G(\lambda)|/(\theta(1)|\ker\chi|)$. Since $K$ is the only minimal normal subgroup of $G$, $\ker\chi=1$. Also $I_G(\lambda)/K=C_G(x)/K\cong R_0$, for some $1\not =x\in K$. Thus $\theta$ is an extension of $\lambda$, leading to $\cod(\chi)=|I_G(\lambda)|=|K||R_0|$. Hence $\cod(G|G')=\{p, |K||R_0|\}$ and $G$ is a *-group. 
	 
		Now, we assume that $G$ is a *-group.  Note that $|\cod((G/K)|(G/K)')|\ge 1$, and $H/K$ is isomorphic to a Frobenius complement of $H$. If $G/K$ is a *-group, then by Lemma \ref{Frob}, $H/K\cong C_p^n$, for some prime $p$ and integer $n\ge 2$, which is a contradiction, as $H/K$ is a Frobeniu. So $|\cod((G/K)|(G/K)')|= 1$ and  $G/K$ is isomorphic to a group described in Lemma \ref{Qian}, and so  $(G/K)'=H/K\cong J \cong C_p$, for some prime $p$,  $p\in\cod(G|G') $ and $G/H\cong R$ is cyclic. 	Assume that $\cod(G|G')=\{p,\beta\}$, for some integer $\beta$.
		
		 Now since  $p=|H:K|=|G'K:K| \mid |G'|$, and $H$ is a Frobenius group, we get that $G'=H$. 
		If there exists $p\ne s\in\pi(R)\setminus\pi(\beta)$, then by Theorem \ref{Qian1}, we get that $s\in\syl_s(G)$, acts Frobeniusly on $G'=H$, which is impossible, as $H$ is a Frobenius group. Whence $\pi(R)\subseteq\pi(\beta)$. Next we prove that $K$ is a $q$-group, for some prime $q$.  Note that $K$ is nilpotent. On the contrary and without loss of generality, assume that $K=S\times Q$, where $S\in\syl_s(K)$ and $Q\in\syl_q(K)$,  are minimal normal subgroups of $G$, for some distinct prime numbers $s$ and $q$. Since $\irr(G|S)\subseteq \irr(G|G')$, by Lemma \ref{2.7}, we get that $|S|$ divides $\beta$. Note that as $G/S$ is not a Frobenius group, $\cod((G/S)|(G/S)')=\cod(G|G')=\{p,\beta\}$, and so $\beta$ divides $|G/S|$, hence for $S_1\in\syl_s(R)$, $|S|\le(\beta)_s\le |S_1|$. However since $R$ acts Frobeniusly on $H/K\cong C_p$ and $H/K\cong C_p$ acts Frobeniusly  on $K$, $|S_1|<p<|S|$, a contradiction, and so  $K$ is a $q$-group.
		
		Now we aim to  prove that $K$ is a minimal normal subgroup of $G$. Let $K/L$ be a chief factor of $G$, for some $L<K$. Note that since $G/L$ is not a Frobenius group there exists $1\ne x_0\in K/L$, such that $K/L< C_{G/L}(x_0)$. Assume that $\frac{C_{G/L}(x_0)}{K/L}\cong R_0\le R$. Let $\lambda\in\irr(K/L)$ be  non-principal, such that $\I_{G/L}(\lambda)=C_{G/L}(x_0)$. As $R_0$ is cyclic, by \cite[Theorem 11.22]{Isaacs} $\lambda$ extends to $\lambda_0\in\irr(\I_{G/L}(\lambda))$. Now since $\ker(\lambda_0^{G/L})=1$, we get that $\cod(\lambda_0^{G/L})=|\I_{G/L}(\lambda)|=|K/L||R_0|=\beta$. Notice that by Lemma \ref{2.7}, $|K/L|\mid \cod(\chi)$ for all $\chi \in  \irr((G/L)|(K/L))$, and so by the same discussion, for each $1\not =x\in K/L$,  we have $\beta=|C_{G/L}(x)|=|K/L||R_0|$, which implies that $\frac{C_{G/L}(x)}{K/L}\cong R_0$, for each $1\not=x\in K/L$. Now we prove that $|R_0|_q=|R|_q$. 	Let $Q\in\syl_q(G/L)$. Choose $1\ne x\in {\bf Z}(Q)\cap K/L$, and non-principal $\lambda\in\irr(K/L)$, such  that $C_{G/L}(x)=I_{G/L}(\lambda)$. Obviously, $Q\le C_{G/L}(x)$.  Now since $\beta=|I_{G/L}(\lambda)|=|R_0||K/L|$, and $|Q|$ divides $|I_{G/L}(\lambda)|$, we get that $|R_0|_q=|R|_q$, and also $(\beta)_q=|Q|$. Let $K/T$ be another chief factor of $G$. By the discussion we just had, $\beta=|K/T||R_1|=|K/L||R_0|$, for some $R_1\le R$, and also $|R_1|_q=|R|_q$. Note that $(\beta)_q=|Q|=|K/T||R_1|_q=|K/L||R_0|_q$, and since $|R_1|_q=|R|_q=|R_0|_q$, we get that $|K/T|=|K/L|$. In other words, each chief factor $K/W$ of $G$, has order $|K/L|$. 
		Let $G_0/K\le G/K$ such that $G_0/K\cong C_p\rtimes R_0$. Since $G/K$ is a Frobenius group and $R$ is cyclic, $G_0$ is unique.
		
		 Looking at  the action of $G_0/K\cong \frac{G_0/L}{K/L}$ on $K/L$, by the above argument,  we see that $C_{G_0/K}(x)$ is a non-trivial $p'$-subgroup of $G_0/K$, for all $1\ne x\in K/L $. Observe that $C_{G_0/K}(x)$ is cyclic, hence every subgroup of $C_{G_0/K}(x)$ is normal in $C_{G_0/K}(x)$. Note that since $G_0/K$ is a Frobenius group, we get that for $s\in\pi(C_{G_0/K}(x))$, $n_s(G_0/K)=p$. Now by Theorem \ref{sak}, there exists non-trivial $K_0/L< K/L$, such that $\frac{|K/L|-1}{|K_0/L|-1}=p$.
		 
	
		Now we prove that $L=1$. 
		On the contrary, let $L/L_0$ be a chief factor of $G$. We consider the following two cases:
		
		$\bullet$ Case 1) Assume that for each non-principal $\lambda\in \irr(L/L_0)$ and $\chi\in \irr(\lambda^{G/L_0})$, $\ker(\chi)=L_0$.\\ 
		Let $Q\in\syl_q(G/L_0)$. Notice that since $|R|_q=|R_0|_q$, we get that $|Q|=|K/L_0||R_0|_q$. Now let $1\ne x\in {\bf Z}(Q) \cap L/L_0$.
		Choose non-principal $\lambda_2\in\irr(L/L_0)$, such that $\I_{G/L_0}(\lambda_2)=C_{G/L_0}(x)$, which clearly contains $Q$. Let $\chi\in\irr(\lambda_2^{G/L_0})$. Obviously, $\chi=\lambda_0^{G/L_0}$, for some $\lambda_0\in\irr(\lambda_2^{\I_{G/L_0}(\lambda_2)})$. Now since $\ker\chi=L_0$, we get that $\cod(\chi)=|\I_{G/L_0}(\lambda_2)|/\lambda_0(1)=|R_0||K/L|=\beta$. Observe that $|\I_{G/L_0}(\lambda_2)|_q=|L/L_0||K/L||R_0|_q$, and so $\lambda_0(1)_q=|L/L_0|$, for each $\lambda_0\in\irr(\lambda_2^{\I_{G/L_0}(\lambda_2)})$. Recall that $$|\I_{G/L_0}(\lambda_2):L/L_0|=\sum\limits_{\lambda_0\in\irr(\lambda_2^{\I_{G/L_0}(\lambda_2)})}\lambda_0^2(1),$$ and so $|\lambda_0(1)|_q^2=|L/L_0|^2$ divides $|\I_{G/L_0}(\lambda_2):L/L_0|_q$, hence $|L/L_0|^3\mid |\I_{G/L_0}(\lambda_2)|_q$. On the other hand, since $H$ is a Frobenius group $H/K\cong C_p$ acts Frobeniusly on $L/L_0$, whence $p\mid |L/L_0|-1$. 
		Assume that $|L/L_0|\ge |K/L|$. Since $G/K$ is a Frobenius group, $|R_0|_q=|R|_q\mid p-1$. Thus $|R_0|_q<|L/L_0|$, implying that $|I_{G/L_0}(\lambda_2)|_q=|L/L_0||K/L||R_0|_q<|L/L_0|^3$, which is a contradiction. 
		Hence $|L/L_0|< |K/L|$. Considering  that $\frac{|K/L|-1}{|K_0/L|-1}=p$  and $p\mid |L/L_0|-1$, we deduce   $|K/L|-1$ does not have a primitive prime divisor, and so Zsigmondy's Theorem asserts that we have the following two cases:\\
		
		 (1) $|K/L|=q^2$, where $q=2^a-1$ is a Mersenne prime, which is impossible.  \\
		
		(2) $|K/L|=2^6$. However $\frac{2^6-1}{2^2-1}=21$ and $\frac{2^6-1}{2^3-1}=3^2$ are not prime, a contradiction .
		
		$\bullet$ Case 2) Assume that there exists non-principal $\lambda\in\irr(L/L_0)$ and $\chi\in \irr(\lambda^{G/L_0})$, such that $L_0<\ker(\chi)$.\\
		Note that since $G$ is a $2$-Frobenius group and $H\nless \ker(\chi)$, we get that $\ker\chi\le K$, and so $K/\ker\chi$ is a chief factor of $G$, hence $|K/\ker\chi|=|K/L|$. 
		Obviously, $\ker\chi\cap L=L_0$, and so $K/\ker\chi=L\ker\chi/\ker\chi\cong L/(L\cap \ker\chi)=L/L_0$. Thus,  $|K/L|=|K/\ker\chi|=|L/L_0|$.
		
		Let $\nu\in\irr(L/L_0)$ be non-principal, $\nu_1=\nu\times 1 \in\irr(L/L_0\times \ker\chi/L_0)=\irr(K/L_0)$, and $\mu\in\irr(\nu_1)$. Obviously, $\cod(\mu)=|K/L||R_0|$. On the other hand, $\mu=\theta^{G/L_0}$, for some linear $\theta\in\irr((\nu_1)^{\I_{G/L_0}(\nu_1)})$, and so $\cod(\theta^{G/L_0})=|\I_{G/L_0}(\nu_1):\ker\theta^{G/L_0}|=|\I_{G/L_0}(\nu_1)|/|\ker\chi:L_0|=|K/L||R_0|$. We get that $|\I_{G/L_0}(\nu)|=|\I_{G/L_0}(\nu_1)|=|K/L|^2|R_0|=|K/L_0||R_0|$, and so we have that $|\I_{G/L_0}(\lambda):K/L_0|=|R_0|$, for all non-principal $\lambda\in \irr(L/L_0)$. With the same argument for each non-principal $\zeta\in\irr(\ker\chi/L_0)$, we have that $|\I_{G/L_0}(\zeta):K/L_0|=|R_0|$.  	
		Now choose non-principals $\nu\in \irr(L/L_0)$ and $\zeta\in \irr(\ker\chi/L_0)$, and let $\omega=\nu\times \zeta$. Note that $\frac{\I_{G/L_0}(\omega)}{K/L_0}=\frac{\I_{G/L_0}(\nu)\cap \I_{G/L_0}(\zeta)}{K/L_0}=R_0^g$, for some $g\in G$ or $\frac{\I_{G/L_0}(\omega)}{K/L_0}=1$, as the intersection of  distinct conjugates of $R_0$ is trivial. 
		
		Assume that $\I_{G/L_0}(\omega)=K/L_0$, and so $\gamma=\omega^{G/L_0}\in \irr(\omega^{G/L_0})$. We have $\cod(\gamma)=$ $|K/L_0|/|\ker\gamma/L_0|$ $=|K/\ker\gamma|=|R_0||K/L|$, which implies that $|K/L|<|K/\ker\gamma|$, and so $K/\ker\gamma$ is not a chief factor of $G$. Now since $L_0\le \ker\gamma<K$, we get that $\ker\gamma=L_0$.
		Hence,  $|L/L_0|=|K/L|=|R_0|$. Since $|R_0|\mid (p-1)$ and $p\mid (|K/L|-1)$, we get a contradiction.  Therefore, for each non-principal $\nu\in \irr(L/L_0)$ and non-principal $\zeta\in \irr(\ker\chi/L_0)$, we have that
		$\frac{\I_{G/L_0}(\nu)}{K/L_0}=\frac{\I_{G/L_0}(\zeta)}{K/L_0}=R_0$,  which means for all $1\ne x\in K/L_0$, $\frac{C_{G/L_0}(x)}{K/L_0}=R_0^g$, for some $g\in G$, and so $R_0^g\le \frac{C_{G/L_0}(K/L_0)}{K/L_0}\trianglelefteq \frac{G/L_0}{K/L_0}$.
		Now since $\frac{G/L_0}{K/L_0}\cong G/K$, which is a Frobenius group, we get that $\frac{H/L_0}{K/L_0}\le \frac{C_{G/L_0}(K/L_0)}{K/L_0}$,  a contradiction.

		 Now since both cases lead to contradiction, we get that $L=1$ and $K$ is a minimal normal subgroup of $G$, as desired, and also $\beta=|K||R_0|$. Now since $\pi(R)\subseteq \pi(|K||R_0|)=\{q\}\cup \pi(R_0)$, and $|R_0|_q=|R|_q$, we get that $\pi(R)=\pi(R_0)$. 
\end{proof}

	\begin{lemma}\label{TH}
		Let $G=P\rtimes R$, where $P\in\syl_p(G)$ and $R$ is a Hall $p'$-subgroup of $G$, for some prime $p$. Let $N<P$ be a minimal normal subgroup of $G$ such that $G/N$ is a Frobenius group, with Frobenius kernel $P/N$. If $N$ is not a direct factor of $P$, then:
			\begin{enumerate}
				\item for each $\chi\in\irr(G|N)$, $(|\ker\chi|,|R|)=1$,
				\item $G$ is a Frobenius group or there exists $\chi\in\irr(G|N)$ such that $(\cod(\chi),|R|)\ne 1$.
			\end{enumerate}
	\end{lemma}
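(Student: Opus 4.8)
The plan is to prove (1) first and feed it into (2). For (1), fix $\chi\in\irr(G|N)$. Since $N$ is minimal normal in $G$ and $N\not\le\ker\chi$, minimality forces $N\cap\ker\chi=1$. I would then pass to $\overline G=G/N$, which is a Frobenius group with kernel $P/N$, and apply to the normal subgroup $\overline{\ker\chi}=\ker\chi N/N$ the standard dichotomy that a normal subgroup of a Frobenius group either lies in the kernel or contains it. If $\overline{\ker\chi}\le P/N$, then $\ker\chi\le\ker\chi N\le P$, so $\ker\chi$ is a $p$-group and $(|\ker\chi|,|R|)=1$. If instead $P/N\le\overline{\ker\chi}$, then $P\le\ker\chi N$, and Dedekind's identity gives $P=(P\cap\ker\chi)N$; since $N\cap\ker\chi=1$ and both $N$ and $P\cap\ker\chi$ are normal in $P$, this yields $P=N\times(P\cap\ker\chi)$, contradicting the hypothesis that $N$ is not a direct factor of $P$. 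So only the first alternative occurs, which is (1).

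For (2), suppose $G$ is not a Frobenius group; then $P\rtimes R$ is not Frobenius with kernel $P$, so $C_P(r_0)\ne1$ for some $1\ne r_0\in R$. Because $G/N$ is Frobenius with kernel $P/N$, the element $r_0$ acts fixed-point-freely on $P/N$, so $C_{P/N}(r_0)=1$; combining this with the coprime-action formula $C_{P/N}(\langle r_0\rangle)=C_P(\langle r_0\rangle)N/N$ forces $C_P(r_0)\le N$, hence $1\ne C_N(r_0)=C_P(r_0)$. Picking $1\ne y\in C_N(r_0)$ and the associated non-principal $\lambda=\lambda_y\in\irr(N)$ (using that $N$ is elementary abelian, so the action of $G$ on $N$ matches that on $\irr(N)$), we get $r_0\in T$, where $T=\I_G(\lambda)=C_G(y)$, so some prime $r\in\pi(R)$ divides $|T|$.

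To finish I would run a Clifford-theoretic degree count. Writing $r^a=|T|_r$ with $a\ge1$ and noting $|T:N|_r=r^a$ since $r\ne p$, the identity $\sum_{\psi\in\irr(T|\lambda)}\psi(1)^2=|T:N|$ shows some $\psi\in\irr(T|\lambda)$ has $\psi(1)_r<r^a$ (otherwise $r^{2a}$ would divide $|T:N|$). For such a $\psi$, set $\chi=\psi^G\in\irr(\lambda^G)\subseteq\irr(G|N)$. Part (1) gives that $\ker\chi$ is a $p$-group, so $|G:\ker\chi|_r=|R|_r$, while $\chi(1)_r=|G:T|_r\,\psi(1)_r<|G:T|_r\,|T|_r=|R|_r$; therefore $\cod(\chi)_r=|G:\ker\chi|_r/\chi(1)_r$ is divisible by $r$, so $(\cod(\chi),|R|)\ne1$, as desired.

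The main obstacle is the step in the second paragraph: extracting a genuine fixed point of some $r_0\in R$ on $N$ (and not merely on $P$) from the failure of the Frobenius property, which is exactly where the Frobenius structure of $G/N$ interacts with the coprimality of the action on $P$. The remaining arithmetic is routine, but one should note that it is precisely part (1) that pins down $|G:\ker\chi|_r=|R|_r$ in the final count, and that the hypothesis ``$N$ is not a direct factor of $P$'' enters only to eliminate the bad case in (1).
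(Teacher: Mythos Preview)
Your proof is correct and follows essentially the same approach as the paper's. Part (1) is identical in spirit: both apply the normal-subgroup dichotomy in the Frobenius quotient $G/N$ to $\ker\chi\,N/N$ and rule out the ``contains $P/N$'' branch by producing the forbidden direct decomposition $P=N\times(P\cap\ker\chi)$. For part (2) the paper argues the contrapositive---assuming every $\cod(\chi)$ is coprime to $|R|$, it uses the same degree-sum identity $|\I_G(\lambda):N|=\sum_\theta\theta(1)^2$ together with part (1) to force $|\I_G(\lambda)|_{p'}=1$ for every nonprincipal $\lambda$, and then passes from ``$R$ acts freely on $N$ and on $P/N$'' to ``$G$ is Frobenius'' via the same coprime-action step you use in the forward direction---so the two arguments are reorganizations of one another rather than genuinely different routes.
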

	\begin{proof} (1) 
		Assume that $\chi\in\irr(G|N)$. We know that $\ker\chi=P_0\rtimes R_0$, where $P_0<P$ and $R_0\le R$. If $R_0\ne 1$, then since $G/N$ is a Frobenius group and $N\ker\chi/N$ is not a $p$-subgroup of $G/N$, we have that $P/N\le N\ker\chi/N$ and we conclude that $P\le NP_0=N\times P_0$, which is a contradiction, and so $R_0=1$. \\
		(2) Assume that $(\cod(\chi),|R|)= 1$, for each $\chi\in\irr(G|N)$. 
			Let $\lambda\in\irr(\chi_N)$ for some $\chi\in \irr(G|N)$. We know that $\chi=\theta^G$ for some $\theta\in\irr(\lambda^{\I_G(\lambda)})$. Thus $\cod(\chi)=\frac{|G:\ker\chi|}{|G:\I_{G}(\lambda)|\theta(1)}=\frac{|\I_G(\lambda):\ker\theta^G|}{\theta(1)}$ and so by part (1), $|\I_G(\lambda)|_{p'}=(\theta(1))_{p'}$. Let $k=(\theta(1))_{p'}$. Recall that $|\I_G(\lambda):N|=\sum\limits_{\theta\in\irr(\lambda^{\I_G(\lambda)})}\theta(1)^2$, which is divisible by $k^2$, thus $(\theta(1))_{p'}=k=1$, and so $|\I_G(\lambda)|_{p'}=1$, which means $\I_G(\lambda)\le P$, for each $1_N\ne \lambda\in \irr(N)$. Thus,  for each $1\ne x\in N$, we have that $C_R(x)=1$, and so $R$ acts Frobeniusly on $N$ and  since $R$ acts Frobeniusly on $P/N$, we conclude that $G$ is a Frobenius group.
	\end{proof}

	\begin{lemma}\label{1p}
		Let $G$ be a *-group and  $N$ a minimal normal $q$-subgroup, for some prime $q$. If $G/N$ is a $p$-group, for some prime $p$, then $G$ is a Frobenius group.
	\end{lemma}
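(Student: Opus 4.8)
The plan is to reduce to the situation $G = N \rtimes P$ with $P$ a Sylow $p$-subgroup, and then to force $P$ to act on $N$ fixed-point-freely. Since every $*$-group is non-nilpotent (Lemma~\ref{nilp}) we must have $p \ne q$, so $N$ is the normal Sylow $q$-subgroup of $G$ and, by Schur--Zassenhaus, $G = N \rtimes P$ with $P \in \syl_p(G)$. Non-nilpotence forces $[N,P] \ne 1$, hence $[N,P] = N$ by minimality of $N$, so $N \le G'$ and $\irr(G|G')$ is the disjoint union of $\irr(G|N)$ and $\irr(G/N|(G/N)')$. For $\chi \in \irr(G|N)$ minimality gives $\ker\chi \cap N = 1$, so $\ker\chi$ is a $p$-group and $|N|$ divides $\cod(\chi)$ by Lemma~\ref{2.7}; moreover $\chi(1)$ is a $p$-number, so the $q$-part of $\cod(\chi)$ equals $q^{a}:=|N|$. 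Every codegree coming from $\irr(G/N|(G/N)')$ is a power of $p$. Since $\cod(G|G')$ is a coprime pair, these two observations force $\cod(G|N)=\{q^{a}\}$ (a single value), and also $G/N$ non-abelian.

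The heart of the matter is to show $O_p(G)=1$. A short argument identifies $O_p(G)=C_P(N)$ (it is the unique Sylow $p$-subgroup of $C_G(N)=N\times C_P(N)$, hence normal in $G$, while conversely any normal $p$-subgroup must centralize $N$). Assume $P_0:=O_p(G)\ne 1$ and pass to $\bar G=G/P_0$; then $\bar N:=NP_0/P_0\cong N$ is again a minimal normal $q$-subgroup, $\bar G/\bar N$ is a $p$-group, $O_p(\bar G)=1$, and inflation yields $\cod(\bar G|\bar N)=\{q^{a}\}$. For $1\ne\bar\lambda\in\irr(\bar N)$ and $\bar\theta\in\irr(\bar\lambda^{\I_{\bar G}(\bar\lambda)})$ the kernel of $\bar\theta^{\bar G}$ is a normal $p$-subgroup of $\bar G$, hence trivial, so
$$q^{a}=\cod(\bar\theta^{\bar G})=\frac{|\I_{\bar G}(\bar\lambda)|}{\bar\theta(1)}=\frac{q^{a}\,|\I_{\bar G}(\bar\lambda):\bar N|}{\bar\theta(1)};$$
combining $\bar\theta(1)=|\I_{\bar G}(\bar\lambda):\bar N|$ with the bound $\bar\theta(1)^2\le|\I_{\bar G}(\bar\lambda):\bar N|$ forces $\I_{\bar G}(\bar\lambda)=\bar N$, so $\bar G$ is a Frobenius group with kernel $\bar N$. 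Lifting centralizers gives $C_G(x)=N\times P_0$ for every $1\ne x\in N$, so every inertia group $\I_G(\lambda)$ with $1_N\ne\lambda$ equals $N\times P_0$; writing $\theta=\lambda\times\mu$ with $\mu\in\irr(P_0)$, the identity $\cod(\theta^G)=q^{a}$ becomes $|\ker(\theta^G)|\,\mu(1)=|P_0|$. Since $\ker(\theta^G)$ is a $p$-group contained in $\ker\mu$, this gives $|P_0|\le|\ker\mu|\,\mu(1)\le|P_0|/\mu(1)$, whence $\mu(1)=1$ and then $\ker\mu=P_0$; as $\mu$ was arbitrary, $P_0=1$, a contradiction.

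Finally, with $O_p(G)=1$ one reruns the same codegree computation inside $G$ itself: for $1_N\ne\lambda$ and $\theta\in\irr(\lambda^{\I_G(\lambda)})$ the kernel of $\theta^G$ is a normal $p$-subgroup, hence trivial, so $q^{a}=\cod(\theta^G)=q^{a}\,|\I_G(\lambda):N|/\theta(1)$, giving $\theta(1)=|\I_G(\lambda):N|$, and the bound $\theta(1)^2\le|\I_G(\lambda):N|$ then forces $\I_G(\lambda)=N$. Equivalently $C_G(x)=N$ for all $1\ne x\in N$, so $P$ acts Frobeniusly on $N$ and $G$ is a Frobenius group. I expect the main obstacle to be the middle step: one has to pass to $G/O_p(G)$, recognise it as a Frobenius group, lift that centralizer information back to $G$, and then exploit the resulting (self-referential) codegree identity to collapse $O_p(G)$ to the trivial group.
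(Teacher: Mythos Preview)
Your proof is correct but takes a different route from the paper's. The paper argues locally: it fixes one $1\ne z\in N$ with $C_G(z)=N\rtimes P_z$ and $P_z\ne 1$, extends the corresponding $\lambda$ to $\lambda_0\in\irr(\I_G(\lambda))$ via \cite[Problem~6.18]{Isaacs}, and reads off from $\cod(\lambda_0^G)=q^k$ that $\ker\lambda_0^G=P_z$, whence $P_z\trianglelefteq G$ and $\I_G(\lambda)=N\times P_z$; then a single product $\lambda\times\theta$ with $1\ne\theta\in\irr(P_z/P_z')$ already has codegree divisible by $pq$, giving the contradiction in one stroke. You instead work globally: you identify $O_p(G)=C_P(N)$, pass to $\bar G=G/O_p(G)$, use the degree-sum identity $\sum_\theta\theta(1)^2=|\I_{\bar G}(\bar\lambda):\bar N|$ to force $\bar G$ to be Frobenius, lift centralizers back to obtain $\I_G(\lambda)=N\times P_0$ for \emph{all} nontrivial $\lambda$, and then collapse $O_p(G)$ by showing every $\mu\in\irr(P_0)$ must be trivial. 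The paper's path is shorter and avoids the quotient detour; yours isolates the obstruction $O_p(G)$ cleanly and trades the coprime-extension lemma for the elementary bound $\theta(1)^2\le|\I:N|$, which is arguably more portable. One small streamlining: your final paragraph repeats the $\bar G$-computation verbatim inside $G$; if you drop the hypothesis $P_0\ne 1$ at the outset, the single $\bar G$-argument already covers the case $P_0=1$ (where $\bar G=G$) and the rerun becomes unnecessary.
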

	\begin{proof}
		Obviously, $G=N\rtimes P$, where $P\in\syl_p(G)$ and $\cod(G|G')=\{p^m,q^k\}$, for some $m$ and $k$. If $C_N(P)=N$, then $G=N\times P$, which contradicts Lemma \ref{nilp}. Hence, as $N$ is  minimal normal,  we get that $C_N(P)=1$, and so for each $1\ne x\in N$, there exists a $p$-subgroup $P_x\notin\syl_p(G)$ such that $ C_G(x)=N\rtimes P_x< G$.
		Now we prove that for each $1\ne x\in N$, $C_G(x)\le N$. On the contrary, assume that there exists $1\ne z\in N$ such that $P_z\ne 1$. Choose $\lambda\in \irr(N)$ such that $C_G(z)=\I_G(\lambda)$. By \cite[Problem 6.18]{Isaacs}, we get that $\lambda$ extends to $\lambda_0\in\irr(\I_G(\lambda))$. Note that $\ker\lambda_0^G\le P_z $ and by Lemma \ref{2.7}, $|N|$ divides $\cod(\lambda_0^G)=|\I_G(\lambda):\ker\lambda_0^G|$. As $\cod(\lambda_0^G)=q^k$ we conclude that $\ker\lambda_0^G=P_z$ and so $\I_G(\lambda)=N\times P_z$. Now choose $\theta\in\irr(P_z/P_z')$. We observe that $\lambda$ extends to $\chi=\lambda\times\theta$. Now it is easy to see that $pq\mid |\I_G(\lambda):\ker\chi^G|=\cod(\chi^G)$, a contradiction. So $C_G(x)=N$ for each $1\ne x\in N$, which means $P$ acts Frobeniusly on $N$.
	\end{proof}

\begin{lemma}\label{1Q}
	Let $G$ be a *-group and $N$ be a minimal normal $q$-subgroup of $G$ such that $\overline{G}=G/N$ is non-nilpotent and $|\cod(\overline{G}|\overline{G}')|=1$. Then $G$ is a $2$-Frobenius group.
\end{lemma}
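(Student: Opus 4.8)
The plan is to transfer the Frobenius structure of $\overline G:=G/N$ up to $G$, exhibiting the chain $1<N<H<G$ as a $2$-Frobenius series, where $H$ is the preimage in $G$ of the Frobenius kernel of $\overline G$. Since $\overline G$ is non-nilpotent with $|\cod(\overline G|\overline G')|=1$, Lemma~\ref{Qian} applies to $\overline G$: it is a Frobenius group $\overline G=\overline H\rtimes\overline R$ with kernel $\overline H=\overline G'\cong C_t^a$ for some prime $t$, with $\overline R$ cyclic, $\cod(\overline G|\overline G')=\{t^a\}$, and — since every minimal normal subgroup of $\overline G$ has order $t^a=|\overline H|$ and, $\overline G$ being Frobenius, lies in or contains $\overline H$ — $\overline H$ is the \emph{unique} minimal normal subgroup of $\overline G$. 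Setting $H=G'N$ gives $H\trianglelefteq G$ with $H/N\cong C_t^a$ and $G/H\cong\overline R$ cyclic, and any normal subgroup $X$ of $G$ with $1\ne XN/N\le\overline H$ must satisfy $XN=H$. Inflating $\irr(\overline G|\overline G')$ into $\irr(G|G')$ preserves codegrees, so $t^a\in\cod(G|G')$, whence $\cod(G|G')=\{t^a,\beta\}$ with $\gcd(t^a,\beta)=1$.

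Next I would record a codegree dichotomy. Every $\chi\in\irr(G|G')$ either has $N\le\ker\chi$, so that $\chi$ is inflated from $\irr(\overline G|\overline G')$ and $\cod(\chi)=t^a$; or $\chi\in\irr(G|N)$, in which case $\ker\chi\cap N=1$ (as $N$ is minimal normal) and Lemma~\ref{2.7} gives $|N|\mid\cod(\chi)$. If $q=t$, this forces every element of $\cod(G|G')$ to equal $t^a$, contradicting $|\cod(G|G')|=2$; hence $q\ne t$. Consequently every $\chi\in\irr(G|N)$ has $\cod(\chi)=\beta$, with $|N|\mid\beta$ and, crucially, $t\nmid\cod(\chi)$.

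The heart of the argument — and the step I expect to be the main obstacle — is to prove that $N$ is the \emph{unique} minimal normal subgroup of $G$. Suppose $M\ne N$ is another; then $M\cap N=1$, and inspecting the image $MN/N$ inside the Frobenius group $\overline G$ (whose only minimal normal subgroup is $\overline H$, and whose complement acts fixed-point-freely) forces $MN=H$, so $M\cong C_t^a$ and $H=N\times M$ with $[M,N]=1$. Since $M$ cannot be central (else the Frobenius kernel $H/N\cong M$ would be central in $\overline G$), $C_G(M)/N\trianglelefteq\overline G$ is a proper normal subgroup containing $H/N$, hence equals $H/N$; thus $G/H\cong\overline R$ acts faithfully, and so fixed-point-freely, on $M$. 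Now take nonprincipal $\lambda\in\irr(N)$ and $\phi\in\irr(M)$ and put $\nu=\lambda\times\phi\in\irr(N\times M)=\irr(H)$. Since $H=N\times M$ fixes both $\lambda$ and $\phi$, and $\I_G(\phi)=H$ by fixed-point-freeness (the corresponding centralizer $C_G(y)$ of a nontrivial $y\in M$ is $H$), we get $\I_G(\nu)=\I_G(\lambda)\cap\I_G(\phi)=H$, so $\nu^G\in\irr(G|N)$ is irreducible; as $N$ and $M$ are $G$-irreducible of coprime orders, $\ker\nu^G=1$, whence $\cod(\nu^G)=|H|=|N||M|$ is divisible by $t$, contradicting the dichotomy. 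Hence $N$ is the unique minimal normal subgroup of $G$, and so every $\chi\in\irr(G|N)$ is faithful.

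Finally, by Schur–Zassenhaus ($q\ne t$) write $H=N\rtimes J$ with $J\cong C_t^a$; the proof is complete once $J$ is shown to act fixed-point-freely on $N$, for then $H$ is a Frobenius group with kernel $N$ and, together with $G/N=\overline G$ being Frobenius with kernel $H/N$, the chain $1<N<H<G$ makes $G$ a $2$-Frobenius group. Suppose instead $C_J(x)\ne1$ for some $1\ne x\in N$; pick $\lambda=\lambda_x\in\irr(N)$ with $\I_G(\lambda)=C_G(x)$, so that $C_H(x)=N\rtimes C_J(x)$ and hence $(|\I_G(\lambda)|)_t\ge|C_J(x)|\ge t$. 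For $\theta\in\irr(\lambda^{\I_G(\lambda)})$ the character $\chi=\theta^G$ is faithful and lies in $\irr(G|N)$, so $\beta=\cod(\chi)=|\I_G(\lambda)|/\theta(1)$, forcing $(\theta(1))_t=(|\I_G(\lambda)|)_t\ge|C_J(x)|>1$; but $\sum_{\theta'\in\irr(\lambda^{\I_G(\lambda)})}\theta'(1)^2=|\I_G(\lambda):N|$ gives $\theta(1)^2\le|\I_G(\lambda):N|$, and since $|N|$ is a $q$-group this yields $(\theta(1))_t^2\le(|\I_G(\lambda)|)_t=(\theta(1))_t$, i.e. $t\nmid\theta(1)$ — a contradiction. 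Thus $J$ acts fixed-point-freely on $N$, and $G$ is a $2$-Frobenius group.
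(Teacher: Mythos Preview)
Your approach is sound and genuinely different from the paper's. The paper treats the cases $q=t$ and $q\ne t$ separately: for $q=t$ it uses Maschke's theorem together with Lemma~\ref{TH} to reach a contradiction, while for $q\ne t$ it works inside $H$, shows $H'=N$ and that $t$ divides no element of $\cod(H|H')$, and then invokes Theorem~\ref{Qian1} to obtain the Frobenius action of $J$ on $N$. Your codegree dichotomy (every nonlinear $\chi$ either inflates from $\overline G$ with codegree $t^a$, or lies over $N$ with $|N|\mid\cod(\chi)$) disposes of $q=t$ in one stroke, and for $q\ne t$ you instead establish that $N$ is the unique minimal normal subgroup and exploit faithfulness of $\irr(G|N)$ to compute $\cod(\chi)=|\I_G(\lambda)|/\theta(1)$ exactly. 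Your route is more self-contained and avoids Theorem~\ref{Qian1}; the paper's localises to $H$ and applies a ready-made result.

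Two points need tightening. In the uniqueness step, ``$C_G(M)/N$ is a proper normal subgroup containing $H/N$, hence equals $H/N$'' is not valid as stated (since $\overline R$ is cyclic, $\overline G$ can have proper normal subgroups strictly between $\overline H$ and $\overline G$), and ``faithfully, and so fixed-point-freely'' is a non sequitur. The correct justification is direct: for $1\ne y\in M$ one has $1\ne yN\in\overline H$, so $C_{\overline G}(yN)=\overline H$ by the Frobenius property, whence $C_G(y)N\le H$ and thus $C_G(y)\le H$; combined with $H\le C_G(y)$ this gives $\I_G(\phi)=C_G(y)=H$ immediately. In the final step, from $\theta(1)^2\le|\I_G(\lambda):N|$ you cannot infer $(\theta(1))_t^2\le(|\I_G(\lambda)|)_t$, since taking $t$-parts does not respect inequalities. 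Use divisibility instead: every $\theta'\in\irr(\lambda^{\I_G(\lambda)})$ yields $\theta'^G\in\irr(G|N)$ faithful with codegree $\beta$, so all $\theta'(1)=|\I_G(\lambda)|/\beta$ coincide; hence $(\theta(1))_t^2$ divides each summand and therefore $|\I_G(\lambda):N|$, whose $t$-part equals $(|\I_G(\lambda)|)_t=(\theta(1))_t$, forcing $(\theta(1))_t=1$ as desired.
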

\begin{proof}
	By Lemma \ref{Qian}, $G/N=K/N\rtimes R$ is a Frobenius group, where $K/N=(G/N)'$ is an elementary abelian $p$-group and $R$ is a cyclic Hall $p'$-subgroup of $G$. Note that $p^k\in\cod(\overline{G}|\overline{G}')\subset \cod(G|G')$, for some $k$. We study the following two cases.
	
	\bigskip
	Case 1) $p=q$.\\
	In this case, $NG'\in \syl_p(G)$ is normal in $G$. 
	 We claim that $N$ is not a direct factor of $G'N$. Otherwise, $G'N$ is an elementary abelian $p$-group and so by Maschke Theorem there exists $L\le G'$ such that $G'N=N\times L$ and  $L\trianglelefteq G$. We may assume that $L$ is a minimal normal subgroup of $G$. Note that if $G$ is a Frobenius, then by Lemma \ref{Frob}, $R\cong Q_8$, which is a contradiction. Hence we get that $R$ does not act Frobeniusly on $N$, thus $N$ and $L$ are not $G$-isomorphic and so $N$ and $L$ are the only minimal normal subgroups of $G$. We choose $1_N\ne\psi\in\irr(N)$ and $1_L\ne\phi\in\irr(L)$. Let $\theta=\psi\times\phi$ and $\chi\in\irr(\theta^G)$. Note that since $N\times L=G'N\nleq \ker\chi$, we get that $\chi$ is faithful and non-linear. Now by Lemma \ref{2.7}. $|G'N|\mid \cod(\chi)$, and since $p^k<|G'N|$, we get a  contradiction. Therefore $N$ is not a direct factor of $G'N$.
	Now by Lemma \ref{TH}(2), there exists $\theta \in\irr(G|N)\subset \irr(G|G'N)$ such that $(\cod(\theta),|R|)\ne 1$. So $\cod(G|G'N)=\{p^k,\cod(\theta)\}$, however by Lemma \ref{2.7}, $p \mid \cod(\theta)$, which is a contradiction.
	
	\bigskip
	Case 2) $p\ne q$.\\
	In this case, $K=N\rtimes L$, for some $L\in\syl_p(K)$.
	Our goal is to show that $L$ acts Frobeniusly on $N$, which shows that $G$ is a $2$-Frobenius group. First of all since $K'\le N$, we get that $K'=1$ or $K'=N$. Assume that $K$ is abelian. Hence $K=N\times L$. Note that for each $1\ne x\in L$, $\overline{C_G(x)}=C_{\overline{G}}(\overline{x})=\overline{K}$, and so $C_G(x)=K$, henceforth $\I_G(\lambda)=K$, for each $1_L\ne \lambda\in\irr(L)$. Let $1_N\ne \lambda_0\in\irr(N)$ and $\chi=\lambda_0\times \lambda$. Then $\chi\in\irr(K)$ and $\chi_L=\lambda$. Now as $pq\mid \cod(\chi)$, by Lemma \ref{subnormality}, we conclude that $pq\mid \cod(\chi^G)=|K:\ker\chi^G|$, a contradiction. So $K'=N$. We claim that $p$ does not divide any element in $\cod(K|K')$. On the contrary, assume that $p\mid \cod(\theta)$, for some $\theta\in\irr(K|K')$. Note that $q\mid |K:\ker\theta|$ and by Ito's Theorem, $\theta(1)$ divides $|K:N|=L$, which means $pq\mid \frac{|K|}{\theta(1)|\ker\theta|}=\cod(\theta)$ and it is a contradiction by Lemma \ref{2.7} and so $p$ does not divide any element in $\cod(K|K')$. Now Theorem \ref{Qian1} asserts that $L$ acts Frobeniuosly on $K'=N$ and $K$ is a Frobenius group, as we wanted.
\end{proof}

	\begin{lemma}\label{2N}
		Let $G$ be a *-group and $N$ be a minimal normal $q$-subgroup of $G$, for some prime $q$. If $\overline{G}=G/N\cong C_p^n\rtimes Q_8$, for some odd prime $p$ and some integer $n>1$, and $\overline{G}$ is a Frobenius group, then $N$ is a $p$-group.
	\end{lemma}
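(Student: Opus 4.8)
The plan is to prove $q=p$ by contradiction, after first determining $\cod(G|G')$. Since $\overline{G}=C_p^n\rtimes Q_8$ is a Frobenius group, $Q_8$ acts fixed-point-freely on $C_p^n$, and $C_p^n$ must be an irreducible $Q_8$-module, for otherwise $\cod(\overline{G}|\overline{G}')$ --- and hence $\cod(G|G')$ --- would contain at least three elements. A direct computation then gives $\cod(\overline{G}|\overline{G}')=\{4,p^n\}$: the inflation to $\overline{G}$ of the unique non-linear (degree $2$) character of $Q_8$ has codegree $4$, while every $\chi\in\irr(\overline{G}|C_p^n)$ is induced from a non-principal linear character of $C_p^n$, has degree $8$ and trivial kernel, hence codegree $p^n$. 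As $\cod(\overline{G}|\overline{G}')\subseteq\cod(G|G')$ and $|\cod(G|G')|=2$, we conclude $\cod(G|G')=\{4,p^n\}$; in particular the only primes involved are $2$ and $p$.

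Next I would rule out the case that $q$ is odd and $q\neq p$. Then $|G|_q=|N|$, so $N\in\syl_q(G)$, and $q$ divides no element of $\cod(G|G')$; by Theorem \ref{Qian1}, $N$ acts Frobeniusly on $G'$. But $[N,G']\le N\cap G'$, and since $N$ is minimal normal, either $N\le G'$ or $N\le\mathbf{Z}(G)$: in the first case any $1\ne x\in N$ centralizes itself inside $G'$, and in the second $N$ centralizes all of $G'\ne 1$; either way the action is not Frobenius, a contradiction. Hence $q=2$, and $|G|=2^{a+3}p^n$ with $|N|=2^a$.

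Now let $M\trianglelefteq G$ be the preimage of the Frobenius kernel $C_p^n$ of $\overline{G}$ and $P\in\syl_p(G)$. Then $P\cong M/N\cong C_p^n$ is abelian, $M=NP$, and $M'\le N$. If $M$ were non-abelian then $M'=N$ by minimality, $M=N\rtimes P$ by Schur--Zassenhaus, so $N=[N,P]$ and, by coprime action, $C_N(P)=1$; thus $N\rtimes P$ would be a Frobenius group with complement $P\cong C_p^n$, impossible since $n>1$ (an elementary abelian group of rank $\ge 2$ is not a Frobenius complement). So $M$ is abelian, $P=O_p(M)$ is characteristic in $M$ and therefore normal in $G$, and $G=P\rtimes T$ with $T\in\syl_2(G)$, $T/N\cong Q_8$. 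Since $[N,P]=1$, the action of $T$ on $P$ factors through $T/N\cong Q_8$, and this action is faithful (it is the Frobenius action of $\overline{G}$), so $C_T(P)=N$, and a short centralizer computation gives $\mathbf{Z}(G)=C_N(T)$. This is non-trivial (a $2$-group acting on the non-trivial $2$-group $N$) and normal, so $\mathbf{Z}(G)=N$; thus $N$ is a central minimal normal subgroup, hence cyclic (if $1\ne\mu\in\irr(N)$ lies under $\chi\in\irr(G)$ then $\ker\chi\cap N=\ker\mu\trianglelefteq G$, forcing $\ker\mu=1$), so $|N|=2$ and $|G|=16p^n$. Note also that $P$, being $Q_8$-irreducible, is a minimal normal subgroup of $G$.

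Finally I would extract a contradiction from $\irr(G|P)$. Fix $1\ne\lambda\in\irr(P)$; as $Q_8$ acts fixed-point-freely on $\irr(P)\setminus\{1\}$, $\I_G(\lambda)=P\times N$ has index $8$, so for each $\mu\in\irr(N)$ the character $\chi_\mu:=(\lambda\times\mu)^G$ is irreducible of degree $8$. Since $P$ is minimal normal, $\ker\chi_\mu\cap P=1$, so $\ker\chi_\mu$ embeds in $G/P\cong T$ and is a $2$-group; with $p^n\mid\cod\chi_\mu$ (Lemma \ref{2.7}), the identity $\cod\chi_\mu=2p^n/|\ker\chi_\mu|$ forces $|\ker\chi_\mu|\in\{1,2\}$. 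For $\mu=1_N$ one has $N\le\ker\chi_\mu$, so $\ker\chi_\mu=N$ and $\cod\chi_{1_N}=p^n$; for the non-principal $\mu$, $\ker\chi_\mu\cap N=1$, and a kernel of order $2$ would be a central subgroup different from $N=\mathbf{Z}(G)$, which is absurd, so $\ker\chi_\mu=1$ and $\cod\chi_\mu=2p^n$. Thus $\{p^n,2p^n\}\subseteq\cod(G|G')$, two non-linear codegrees that are distinct and not coprime --- contradicting that $G$ is a $*$-group. Hence $q=p$. The main obstacle is the case $q=2$: the decisive points are the Frobenius-complement obstruction that forces $M$ abelian (using $n>1$) and the explicit analysis of the degree-$8$ induced characters producing the non-coprime codegrees $p^n$ and $2p^n$.
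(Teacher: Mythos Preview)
Your overall strategy---rule out odd $q\neq p$, then in the case $q=2$ force $M=NP$ to be abelian, deduce $N=\mathbf Z(G)\cong C_2$, and finally produce two non-coprime codegrees $|P:K_\lambda|$ and $2|P:K_\lambda|$ from $\irr(G|P)$---is attractive, but there is a genuine gap at the key structural step.

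The problematic inference is ``$C_N(P)=1$; thus $N\rtimes P$ would be a Frobenius group with complement $P$''. From $M'=N=[N,P]$ and coprime action you correctly get $C_N(P)=1$, but this only says that no nontrivial element of $N$ is fixed by \emph{all} of $P$; it does not say that every $1\neq y\in P$ acts fixed-point-freely on $N$. For instance, $P=C_p\times C_p$ can act on $N=V_1\times V_2$ with the two factors of $P$ acting fixed-point-freely on $V_1,V_2$ respectively and trivially on the other; then $C_N(P)=1$ while each cyclic factor of $P$ has nontrivial fixed points, so $N\rtimes P$ is not Frobenius. Consequently you have not excluded the case $M'=N$ (equivalently $P\not\trianglelefteq H$), and everything you do afterwards (normality of $P$ in $G$, centrality of $N$, $|N|=2$, the final codegree computation) rests on the unproved assertion that $M$ is abelian. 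The paper does not try to force $M$ abelian; instead it first uses Lemma~\ref{2.7} to bound $|N|\le 4$, then treats the two possibilities $P\trianglelefteq H$ and $P\not\trianglelefteq H$ separately---in the second it squeezes out $|N|=4$, $p=3$, and builds a character of codegree divisible by $6$. Your argument essentially reproduces (in sharper form) the paper's treatment of the abelian case but leaves the nonabelian case unhandled.

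A second, less serious issue: your claim that $C_p^n$ must be an irreducible $Q_8$-module is not justified and is in fact false. If $A$ is an irreducible fixed-point-free $\mathbb F_pQ_8$-module, then $(A\oplus A)\rtimes Q_8$ is still a Frobenius $*$-group: every hyperplane of $A\oplus A$ contains exactly one irreducible submodule, so all induced characters have the same codegree $|A|$. This does not wreck your final step, since the contradiction $\{\,|P:K_\lambda|,\,2|P:K_\lambda|\,\}\subseteq\cod(G|G')$ works regardless of whether $K_\lambda=1$; but you should replace $p^n$ by the appropriate $p^k$ and drop the appeal to minimality of $P$.
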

	\begin{proof} Obviously, $n>1$.
		Note that since $\{4,p^k\}\subseteq \cod(\overline{G}|\overline{G}')\subseteq \cod(G|G')$, for some $k$, we get that $\cod(G|G')=\{4,p^k\}$.
		On the contrary, assume  $N$ is not a $p$-group. 
		If there exists $1_N\ne \gamma\in\irr(N)$ such that $\gamma$ extends to $\gamma_0\in \irr(G)$, then we choose $\phi\in\irr(Q_8)$, such that $\phi(1)=2$. Note that $\gamma_0\phi\in\irr(G)$ is non-linear and $N\cap\ker(\gamma_0\phi)=1$, so by Lemma \ref{2.7}, $|N|$ divides some elements in $\cod(G|G')=\{4,p^k\}$. Also in the other case, when there  exists $1_N\ne \gamma\in\irr(N)$, such that $\gamma^G$ has a non-linear constituent, again by Lemma \ref{2.7}, $|N|$ divides some element in $\cod(G|G')=\{4,p^k\}$, hence
		$q=2$ and $|N|\le 4$ or $q=p$. Assume that $q=2$.  Let $C_p^n\cong P\in\syl_p(G)$. Note that since $G/N$ is a Frobenius group, we get that for each $1\ne x\in P$,
		$C_G(x)\le H$, where $H=NP=N\rtimes P$. We claim that $P$ is not normal in $H$. Otherwise, $H=N\times P$, and $C_G(x)=H$ for each $1\ne x\in P$. Choose $1_N\ne\lambda_0\in\irr(N)$, and $\lambda_x\in\irr({P})$ such that $\I_G(\lambda_x)=H$. Obviously, $\lambda_x$ extends to $\chi =\lambda_x\times \lambda_0$. Observe that $2p\mid |H:\ker\chi|$, therefore $2p\mid |H:\ker\chi^G|=\cod(\chi^G)$, a contradiction. Hence $P$ is not normal in $G$, and so there exists $y\in N$, such that $C_H(y)<H$,
		 which means $N\cap {\bf Z}(H)=1$ and we get that $|N|=4$. Therefore $N \cong C_2\times C_2$, and $H/C_H(N)\le S_3$. Note that since $2\nmid |H:C_H(N)|$, we get that $|H:C_H(N)|=3$, which means $p=3$ and $C_H(N)$ is a maximal subgroup of $H$, therefore $C_H(z)=C_H(N)=N\times C_3^{n-1}$, for each $1\ne z\in N$. Now choose a non-principal character $\omega\in\irr(C_3^{n-1})$ and $1_N\ne \lambda\in\irr(N)$. Obviously, $\lambda$ extends $\phi=\lambda\times \omega$ and $6\mid |H:\ker\phi^H|=\cod(\phi^H)$. As $\lambda$ does not extend to $G$, there exists a non-linear irreducible character in $\irr(\phi^G)$ and by Lemma \ref{subnormality}, $6=2p$ divides some elements in $\cod(G|G')$, which is a contradiction and we conclude that $q=p$, and so $N$ is a $p$-group.
	\end{proof}

\begin{lemma}\label{2q}
	Let $G$ be a finite group and $N$ be a minimal normal $q$-subgroup of $G$, for some prime $q$. If $G/N$ is a  $2$-Frobenius group, then $G$ is not a *-group.
\end{lemma}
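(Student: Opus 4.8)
Suppose, towards a contradiction, that $G$ is a $*$-group with a minimal normal $q$-subgroup $N$ such that $\overline{G}:=G/N$ is a $2$-Frobenius group; fix a series $1<\overline{K}<\overline{H}<\overline{G}$ realising this and let $K,H\le G$ be the preimages of $\overline{K},\overline{H}$. Since every $\bar\chi\in\irr(\overline{G})$ inflates to a character of $G$ of the same degree and codegree, and non-linearity is preserved by inflation, $\cod(\overline{G}|\overline{G}')\subseteq\cod(G|G')$; as $G$ is a $*$-group this forces $|\cod(\overline{G}|\overline{G}')|\le 2$, the two values being coprime when there are two. A $2$-Frobenius group is non-nilpotent and is not a Frobenius group, so Lemma \ref{Qian} excludes $|\cod(\overline{G}|\overline{G}')|=1$ (alternatively, in that case Lemma \ref{1Q} would make $G$ a $2$-Frobenius $*$-group, and Theorem \ref{2Fro} would then force $N$ to be the unique minimal normal subgroup of $G$ with $G/N$ Frobenius, a contradiction). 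Hence $|\cod(\overline{G}|\overline{G}')|=2$, and the two codegrees being coprime, $\overline{G}$ is itself a $2$-Frobenius $*$-group.

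Theorem \ref{2Fro} then applies to $\overline{G}$: we may take $\overline{H}/\overline{K}\cong C_p$ for a prime $p$, $\overline{G}/\overline{H}\cong\overline{R}$ cyclic, $\overline{K}$ the unique minimal normal subgroup of $\overline{G}$ (so $\overline{K}\cong C_\ell^{a}$ for a prime $\ell$), $C_{\overline{G}}(\bar x)/\overline{K}\cong\overline{R}_0\le\overline{R}$ with $\pi(\overline{R}_0)=\pi(\overline{R})$ for all $1\ne\bar x\in\overline{K}$, and $\cod(\overline{G}|\overline{G}')=\{p,\,|\overline{K}||\overline{R}_0|\}$. Hence $\cod(G|G')=\{p,\,|\overline{K}||\overline{R}_0|\}$, and coprimality gives $p\ne\ell$ and $p\nmid|\overline{R}|$. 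As $\overline{G}'=\overline{H}$ we get $G'N=H$; I would first verify that $N\le G'$ — otherwise $N$ survives into the abelianisation of $G$, and comparing the linear characters of $G$ with the non-linear ones inflated from $\overline{G}$ yields a contradiction at once. With $N\le G'$ in hand, every character in $\irr(G|N)$ is non-linear, hence of codegree $p$ or $|\overline{K}||\overline{R}_0|$, and since $N$ is abelian and meets the kernel of any such character trivially, Lemma \ref{2.7} forces $|N|$ to divide one of these two numbers; combined with $p\ne\ell$ this gives $q\in\{p,\ell\}\cup\pi(\overline{R})$.

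It remains to exclude these possibilities. First I would describe the minimal normal subgroups of $G$: if $M\ne N$ were a second one, then — $\overline{K}$ being the unique minimal normal subgroup of $\overline{G}$ — one gets $\overline{K}\le MN/N$, whence $K\le MN$ and so $K=M\times N$ with $M\cong\overline{K}$; choosing non-principal $\psi\in\irr(N)$ and $\varphi\in\irr(M)$, the character $(\psi\times\varphi)^{G}$ is non-linear with kernel meeting $K$ trivially, so Lemma \ref{2.7} makes $|K|=|N||\overline{K}|$ divide its codegree, forcing $|N||\overline{K}|\in\{p,|\overline{K}||\overline{R}_0|\}$, which is impossible once $|N|>|\overline{R}_0|$ and is excluded in the few remaining small cases by coprimality of $\cod(G|G')$. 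Thus $N$ is the unique minimal normal subgroup of $G$, so $G$ has a faithful non-linear character; following its codegree through the $2$-Frobenius structure of $\overline{G}$ — re-running, for the chief factors of $G$ lying below $K$, the Zsigmondy- and $N_q$-type analysis of the proof of Theorem \ref{2Fro} (with Theorem \ref{sak}) and the kernel dichotomy of Lemma \ref{2N} — forces a prime outside $\{p\}\cup\pi(\overline{K})\cup\pi(\overline{R}_0)$ into some codegree, contradicting that $\cod(G|G')$ has exactly two, pairwise coprime, elements. I expect this last step — redoing the inductive Zsigmondy argument with the extra bottom layer $N$ present — to be the real obstacle; the reduction above and the elimination of a second minimal normal subgroup are routine given Lemmas \ref{Qian}, \ref{1Q}, \ref{2.7}, \ref{2N} and Theorem \ref{2Fro}.
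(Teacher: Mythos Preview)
Your reduction --- showing $\overline{G}=G/N$ is itself a $2$-Frobenius $*$-group and reading off $\cod(G|G')=\{p,\,|\overline{K}||\overline{R}_0|\}$ from Theorem~\ref{2Fro} --- matches the paper's opening. From there, however, you head in a different direction, and the part you yourself flag as ``the real obstacle'' (re-running the Zsigmondy/$N_q$ analysis of Theorem~\ref{2Fro} with the extra layer $N$ at the bottom) is never carried out; as written, the proposal is a sketch that stops short of a proof. Your intermediate step eliminating a second minimal normal subgroup also has loose ends: $(\psi\times\varphi)^{G}$ need not be irreducible, you silently use $M\le K$ (which requires the normal-subgroup dichotomy of $2$-Frobenius groups applied to $MN/N$), and the inequality ``$|N|>|\overline{R}_0|$'' is not automatic.

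The paper bypasses all of this with a direct dichotomy on whether $q=p$ or $q\neq p$, never touching Zsigmondy again and never needing uniqueness of $N$. If $q=p$ then $K=N\rtimes L$ with $L$ an $s$-group; one first forces $N\le{\bf Z}(K)$ (otherwise a non-principal $\lambda\in\irr(N)$ together with a character of $C_K(N)$ yields, via Lemma~\ref{subnormality}, a codegree divisible by $ps$), and then with $K=N\times L$ the same trick on a non-principal character of $L$ gives the contradiction. If $q\neq p$, write $H=K\rtimes P$ with $P\cong C_p$. Either $P$ acts Frobeniusly on $K$, making $G$ itself a $2$-Frobenius $*$-group --- but then Theorem~\ref{2Fro} forces $K$ minimal normal, contradicting $N<K$ --- or some non-principal $\lambda\in\irr(N)$ has $P\le\I_G(\lambda)$. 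In the latter case one shows $\lambda$ does not extend to $G$ (Gallagher would produce a codegree divisible by $p|N|$), then for each $\theta\in\irr(\lambda^G)$ rules out $p\mid\cod(\theta)$ (Lemma~\ref{2.7}) and $p\mid|\ker\theta|$ (else $H$ splits off $N$, again a bad codegree), so $p\mid\theta(1)$; the relation $|\I_G(\lambda):N|=\sum_{\gamma}\gamma(1)^2$ then forces $p^2\mid|G|$, contradicting $|P|=p$. This case split is the missing idea in your proposal.
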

\begin{proof}
	On the contrary, assume that $G$ is a *-group. Let $1<K/N<H/N<G/N$ be a normal series such that $G/K$ is a Frobenius group with kernel $H/K$ and $H/N$ is a Frobenius group with kernel $K/N$. By Lemma \ref{2Fro}, $G/K\cong H/K\rtimes R$, is a Frobenius group for some cyclic subgroup $R$, and $K/N$ is a minimal normal $s$-subgroup of $G/N$, for some prime $s$. Also $(G/N)'=H/N\cong K/N \rtimes C_p$, for some prime $p$, and $p\in\cod(G|G')$. Moreover, $\cod(G/N|(G/N)')=\{p, |K/N||R_0|\}$, where $R_0< R$. Therefore, $\cod(G|G')=\{p, |K/N||R_0|\}$. 
	We get a contradiction in the following two cases:
	
	\bigskip
	Case 1) Let $p=q$.\\
	In this case, $K=N\rtimes L$, for some $L\in \syl_s(K)$. We claim that $N\le {\bf Z}(K)$. Otherwise, $N\cap {\bf Z}(K)=1$. So there exists $1_N\ne\lambda\in\irr(N)$  such that $\I_H(\lambda)<H$, which means that $\lambda$ is not extendable to $G$, and every $\chi\in\irr(\lambda^G)$ is non-linear.  If $|N|\ne p$, then as $N\cap \ker\chi=1$, by Lemma \ref{2.7} we get that $|N|\mid\cod(\chi)$. So $p^2\mid \cod(\chi)\in\cod(G|G')$, a contradiction. Hence $N=\langle x\rangle \cong C_p$. Observe that $p\mid (|L|-1)$, therefore $L$ does not act Frobeniusly on $N$, which means $N< C_K(x)=C_K(N)<K$. Note that $C_K(x)=N\times L_1$, for some $s$-subgroup $L_1\notin \syl_s(K)$. Let $1_{L_1}\ne\theta\in\irr(L_1)$. Obviously, $\psi=\lambda\times\theta$ does not extend to $G$ and $ps\mid\cod(\psi)$. Also since $C_K(N)$ is subnormal in $G$, by Lemma \ref{subnormality}, $ps$ divides an element in $\cod(G|G')$, a contradiction. So $N\le {\bf Z}(K)$ and $K=N\times L$. Note that $C_H(l)=K$, for each $1\ne l\in L$. We choose $1_L\ne\omega\in\irr(L)$ such that $\I_H(\omega)=K$. Now similar to the above discussion about $\lambda$, we get a contradiction.

	\bigskip
	Case 2) Let $p\ne q$\\
	In this case, $H=K\rtimes P$, for some $C_p\cong P\in\syl_p(H)$. 
	If $P$ acts Frobeniusly on $K$, then $G$ is a $2$-Frobenius *-group, and so by Lemma \ref{2Fro}, $K$ is a minimal normal subgroup of $G$,  a contradiction.
	Hence there exists $1_N\ne \lambda\in\irr(N)$, such that $P\le \I_G(\lambda)$. We claim that $\lambda$ does not extend to $G$. Otherwise, there exists a linear character $\lambda_0\in\irr(\lambda^G)$ and $G'\cap N=1$. Recall that $G'N=H$, and so $H=G'\times N$ and $P\le G'\le\ker\lambda_0$. Choose non-principle $\psi\in\irr(H/K)$, and by Gallagher's  Theorem let $\chi=\lambda_0\psi^G\in\irr(G)$. Note that $N\nless \ker\chi$ and $P\nless \ker\chi$, which implies that $p\nmid |\ker\chi|$. Also $\chi(1)=|R|$, hence $p\nmid \chi(1)|\ker\chi|$ which implies that $p|N|\mid \cod(\chi)$, a contradiction. Therefore our claim holds and every constituent of $\lambda^G$ is non-linear. If there exists $\theta\in\irr(\lambda^G)$ such that $p\mid \cod(\theta)$, then by Lemma \ref{2.7}, $p|N|\mid\cod(\psi)$, which is a contradiction. Hence for each $\theta\in\irr(\lambda^G)$, $p\nmid \cod(\theta)$. If $p\mid |\ker\theta|$, then $\ker\theta\cap H\ne 1$, since every Sylow $p$-subgroup of $G$ is contained in $H$. Let $M:=\ker\theta\cap H$. Note that since $MN/N\trianglelefteq H/N$, $H/N$ is a Frobenius group and also $p\mid |MN:N|$, we get that $H=MN$. Observe that $N\cap M=N\cap\ker\theta\cap H=1$, so $H=M\times N\cong (K/N\rtimes P)\times N$. Let $1_P\ne\lambda_1\in\irr(P)$ and $\omega=\lambda_1\times\lambda$, so $pq\mid \cod(\omega)$. Note that $\omega$ does not extend to $G$, hence $\omega^G$ has a non-linear constituent $\chi$, and so by Lemma \ref{subnormality}, $\cod(\chi)$ divides an element in $\cod(G|G')$, a contradiction. Hence for each $\theta\in\irr(\lambda^G)$,  $p\nmid |\ker\theta|$ and since $p\nmid \cod(\theta)$, we get that $p\mid \theta(1)$. Remember that $\theta=\gamma^G$, for some $\gamma\in\irr(\I_G(\lambda))$. As $P\le\I_G(\lambda)$ and $p\mid \theta(1)=|G:\I_G(\lambda)|\gamma(1)$, we conclude that $p\mid \gamma(1)$. Now since $|\I_G(\lambda):N|=\sum\limits_{\gamma\in\irr(\lambda^{\I_{G}(\lambda)})}\gamma(1)^2$, we get that $p^2\mid |G|$, which is a contradiction. 
\end{proof}

Now we are ready to prove Theorem A:
\bigskip

%
{\bf {\it Proof of Theorem A.}}
 According to Theorems \ref{Frob} and \ref{2Fro}, if we prove  $G$ is  a Frobenius group or a $2$-Frobenius group, then we are done. So, on the contrary, assume that $G$ is a *-group of minimal order which is neither a  Frobenius group  nor  a $2$-Frobenius group. If $G/N$ is abelian for each minimal normal subgroup $N$ of $G$, then $G'$ is the unique minimal subgroup of $G$ and by \cite[Theorem 12.3]{Isaacs}, $G$ is either a $p$-group or a group explained in Lemma \ref{Qian}, a contradiction. So there exists a minimal normal subgroup $N$ of $G$ such that $G/N$ is non-abelian. If $|\cod((G/N)|(G/N)')|=1$, then by Lemma \ref{Qian}, $G/N$ is either a $p$-group or a Frobenius group,  which  is  contradicting  Lemmas \ref{1p} and \ref{1Q} respectively. So $|\cod((G/N)|(G/N)')|=2$, and $G/N$ is a *-group. By Lemma \ref{2q}, we get that $G/N$ is not a $2$-Frobenius group, and so by minimality of $|G|$, we get that $G/N$ is a Frobenius *-group and so by Lemma \ref{Frob}, $G/N\cong C_p^k\rtimes Q_8$, for some prime $p$ and integer $k$. Notice that for each $1\ne l\in L$, $C_P(l)\le N$.
	 By Lemma \ref{2N}, $N$ is  a $p$-group. So $G=P\rtimes L$, where $P\in\syl_p(G)$ and $Q_8\cong L\in\syl_2(G)$. Note that every minimal normal subgroup of $G$ is contained in $P$. If there exists another minimal normal subgroup $M\ne N$, then since the Sylow $p$-subgroup of $G/M$ is normal, we get that $G/M$ is not a $2$-Frobenius group and so by minimality of $|G|$, we see that $G/M$ is also a Frobenius group, which implies that for each $1_L\ne l\in L$, $C_P(l)\le N\cap M=1$ and $G$ is a Frobenius group, a contradiction. Hence $N$ is the unique minimal normal subgroup of $G$. Assume that there exists $1_N\ne\lambda\in\irr(N)$, such that $\lambda$ extends to $\theta\in\irr(G)$. Let $\chi\in\irr(Q_8)$ such that $\chi(1)=2$. Note that $N\cap \ker(\theta\chi)=1$, and so $\theta\chi$ is faithful and $\cod(\theta\chi)=|G|/2=2|P|$ which is a contradiction, since $\cod(G|G')=\cod(G/N|(G/N)')=\{4,p^a\}$, for some integer $a$. So non-principal irreducible characters of $N$ do not extend to $G$. Let $1_N\ne\lambda\in\irr(N)$. Evidently, for each $\chi\in\irr(\lambda^G)$, $N\cap\ker\chi=1$. Also $\chi=\theta^G$ for some $\theta\in\irr(\I_G(\lambda))$ and by Lemma \ref{2.7}, $|N|\mid\cod(\chi)$, so $p^a=\cod(\chi)=|G|/\chi(1)=|\I_G(\lambda)|/\theta(1)$, hence $\theta(1)=\frac{|\I_G(\lambda)|}{p^a}$. Let $k=\theta(1)$. Since 
	$|\I_G(\lambda):N|=\sum\limits_{\theta\in\irr(\lambda^{\I_G(\lambda)})}\theta(1)^2=k^2|\irr(\lambda^G)|$, we get that $2\nmid k$ and so $2\nmid |\I_G(\lambda)|$, which means for each $1\ne x\in N$, $C_G(x)\le P$, and $Q_8$ acts Frobeniusly on $N$. Recall that $Q_8$ also acts Frobeniusly on $P/N$. Whence $G$ is a Frobenius group, a contradiction.
$\blacksquare$

\end{document}